\documentclass[12pt]{amsart}

\usepackage{amssymb,bm,mathrsfs,mathtools,booktabs}
\usepackage{enumerate}
\usepackage{enumitem}
\usepackage{tikz}
\usepackage{graphicx}
\usepackage{verbatim}
\usepackage{csquotes}
\usepackage{dsfont}
\usepackage{xcolor}

\usepackage[centering,width=6in]{geometry}
\usepackage{thmtools}
\usepackage{thm-restate}

\usepackage[hypertexnames=false]{hyperref}
\hypersetup{
    colorlinks=true,
    citecolor=blue,
    filecolor=black,
    linkcolor=blue,
    urlcolor=black
}

\numberwithin{equation}{section}

\newcommand{\R}{{\mathbb R}}
\newcommand{\Z}{{\mathbb Z}}
\newcommand{\Q}{{\mathbb Q}}

\newcommand{\T}{{\mathbb T}}

\newcommand{\rank}{{\rm rank}}

\DeclareMathOperator{\diam}{diam}
\DeclareMathOperator{\supp}{supp}

\newtheorem{theorem}{Theorem}[section]
\newtheorem{lemma}[theorem]{Lemma}

\newtheorem{corollary}[theorem]{Corollary}

\newtheorem{defn}{Definition}[section]

\theoremstyle{remark}

\newtheorem{question}{Question}[section]

\usepackage[
  backend=biber,
  style=numeric-comp,
  giveninits=true,
  uniquename=false,
  sorting=nyt,
  maxnames=99,
  doi=false,
  url=false,
  isbn=false,
  eprint=false
]{biblatex}

\DeclareDelimFormat{multinamedelim}{\addcomma\space}
\DeclareDelimFormat{finalnamedelim}{\addspace\bibstring{and}\space}

\DeclareFieldFormat{title}{#1}
\DeclareFieldFormat[article,inbook,incollection,inproceedings,book,misc,unpublished]{title}{#1}
\DeclareFieldFormat{journaltitle}{#1}
\DeclareFieldFormat{booktitle}{#1}

\DeclareFieldFormat{pages}{#1}
\DeclareFieldFormat{pagetotal}{#1\addspace pp}

\renewbibmacro{in:}{}

\AtEveryBibitem{%
  \clearfield{doi}%
  \clearfield{url}%
  \clearfield{isbn}%
  \clearfield{issn}%
  \clearfield{urldate}%
  \clearfield{mrnumber}%
  \clearfield{mrclass}%
  \clearfield{mrreviewer}%
  \clearfield{fjournal}%
}

\DeclareBibliographyDriver{article}{%
  \usebibmacro{bibindex}%
  \usebibmacro{begentry}%
  \printnames{author}%
  \setunit{\addcomma\space}%
  \printfield{title}%
  \setunit{\addcomma\space}%
  \printfield{journaltitle}%
  \setunit{\addspace}%
  \printfield{volume}%
  \iffieldundef{number}
    {}
    {\setunit{\addspace}\printtext{\mkbibparens{\printfield{number}}}}%
  \iffieldundef{year}
    {}
    {\setunit{\addspace}\printtext{\mkbibparens{\printdate}}}%
  \iffieldundef{pages}
    {}
    {\setunit{\addspace}\printfield{pages}}%
  \usebibmacro{finentry}%
}

\DeclareBibliographyDriver{book}{%
  \usebibmacro{bibindex}%
  \usebibmacro{begentry}%
  \ifnameundef{author}
    {\printnames{editor}}
    {\printnames{author}}%
  \setunit{\addcomma\space}%
  \printfield{title}%
  \setunit{\addcomma\space}%
  \printfield{series}%
  \setunit{\addcomma\space}%
  \printlist{publisher}%
  \setunit{\addcomma\space}%
  \printlist{location}%
  \setunit{\addcomma\space}%
  \printfield{year}%
  \iffieldundef{pagetotal}
    {}
    {\setunit{\addcomma\space}\printfield{pagetotal}}%
  \usebibmacro{finentry}%
}

\title[Common tiling functions with small support]{Common tiling functions with small support}

\author{Kailing Lai}
\address[Kailing Lai]{School of Mathematics and Information Science, Guangzhou University, Guangzhou, 510006, P.~R.~China,\newline and \newline
\href{http://math.uoc.gr/en/index.html}{Department of Mathematics and Applied Mathematics}, University of Crete,\\Voutes Campus, 70013 Heraklion, Greece}
\email{lkailinggl@163.com}

\subjclass[2010]{Primary: 52C22, Secondary: 42B10.}
\keywords{lattices, tiling}
\begin{document}
\begin{abstract}
For $N$ lattices in $\R^d$ with volume $1$ and pairwise trivial intersections, every nonzero common tiling function has support diameter $\Omega(N^{1/d})$, while for lattice families whose fundamental domains have uniformly bounded diameters, the standard convolution construction gives an $O(N)$ upper bound, leaving a gap that has remained open since the work of Kolountzakis and Wolff \cite{kolwolff-1999Mathematika}. We close this gap by constructing, for every $d\geq 2$ and all sufficiently large $N$, lattice families satisfying the same volume and intersection conditions that admit a nonnegative common tiling function with support diameter $O(N^{1/d})$, thereby also answering Question 1 of Kolountzakis and Papageorgiou \cite{kolPapageorgiou-functions-2022jfaa}.
We also obtain the optimal $O(\sqrt N)$ upper bound by constructing, for any prescribed family of plane lattices whose volumes lie in a fixed bounded set independent of $N$, a pairwise trivially intersecting family with the same respective volumes and with bases arbitrarily close to suitable bases of the prescribed lattices.
\end{abstract}

\date{\today}
\maketitle

\tableofcontents

\section{Introduction}
\subsection{The Steinhaus tiling problem}
The classical Steinhaus problem for $\Z^2$ \cite{Moser,sierpi-sur-un-problem-1959fund}  asks for a set $E\subseteq\R^2$ having exactly one point in each translate of $R_\theta\Z^2$, for every rotation angle $\theta$. Equivalently, for each fixed rotation, every translate of the corresponding rotated lattice contains exactly one point of $E$.

The problem has two standard formulations: a set-theoretic formulation and a measurable formulation.
In the set-theoretic
formulation, no measurability assumption is imposed and the tiling is
required to be exact: for every rotation $R_\theta$, the translates of
$E$ by $R_\theta\Z^2$ form a partition of $\R^2$. Jackson and Mauldin proved the existence of a set-theoretic Steinhaus set in the plane \cite{jacksonsteve-2002amer, jacksonsteve-2002proc, jacksonsteve-2003bull}.  
%Ciucu \cite{Ciucu1996} and Srivastava and Thangadurai \cite{SrivastavaThangadurai2005} obtained topological restrictions on exact Steinhaus sets.
In the measurable formulation, $E$ is required
to be Lebesgue measurable and, for each $\theta$, the translates of $E$ by $R_\theta\Z^2$ form a partition of $\R^2$ up to a null set.
The plane measurable problem remains open. Several results are known. Croft \cite{croft1982quart} and Beck \cite{beckjozsef1989studia} ruled out bounded measurable Steinhaus sets in the plane. Subsequent work of Kolountzakis \cite{kol-anewestimate-1996internat,kol-aproblemof1996} and of Kolountzakis and Wolff \cite{kolwolff-1999Mathematika} imposed quantitative constraints on how rapidly a measurable planar solution could decay at infinity. In higher dimensions, Kolountzakis and Wolff \cite{kolwolff-1999Mathematika} showed that measurable Steinhaus sets do not exist for $d\geq 3$. A different proof of this higher-dimensional nonexistence result was later obtained by Kolountzakis and Papadimitrakis \cite{kol-thesteinhaustilingproblem-2002illinois}.

%Mauldin and Yingst \cite{MauldinYingst2003} studied measurable Steinhaus problems for more general lattices, and Chan and Mauldin \cite{ChanMauldin2007} proved nonexistence results for lattices equivalent to integral lattices in dimensions $d\geq 3$.

\subsection{Lattices and tiling}
A lattice $\Lambda\subseteq \R^d$ is a discrete additive subgroup of $\R^d$. It admits a representation
$\Lambda=B\Z^r,$
where $B\in\R^{d\times r}$ has rank $r$. The integer $r$, called the rank of $\Lambda$, is the dimension of the smallest real subspace of $\R^d$ containing $\Lambda$. The choice of $B$, whose columns generate the lattice $\Lambda$, is not unique: for $U\in\mathrm{GL}(r,\Z)$, the matrix $BU$ gives another basis matrix  for the same lattice. A lattice is said to be full rank when $r=d$, in which case $B\in\mathrm{GL}(d,\R)$. For such a lattice, the value $|\det B|$ is independent of the choice of basis matrix and is called the volume of $\Lambda$, denoted by $\text{vol}(\Lambda)$.
For a full-rank lattice $\Lambda=B\Z^d$, the dual lattice is
%\begin{equation}\label{dual lattice}
\[\Lambda^\ast=
\left\{\xi\in\R^d:\langle \xi,\lambda\rangle\in\Z
\text{ for all }\lambda\in\Lambda
\right\}
=B^{-T}\Z^d.\]
%\end{equation}
Throughout this paper, all lattices are assumed to be of full rank.

Let $\Lambda$ be a lattice in $\R^d$. Writing
$\Lambda=B\Z^d$, where $B=(b_1,\ldots,b_d)$ with $b_i\in\R^d$, the basis matrix $B$ determines the fundamental parallelotope
\[
P_B=B[0,1)^d=\left\{\sum_{i=1}^{d}t_i b_i:
0\leq t_i<1,\ i=1,\ldots,d\right\}.
\]
The set $P_B$ is a measurable fundamental domain for $\Lambda$: each $x\in\R^d$ can be written uniquely as
$x=p+\lambda,$
with $p\in P_B$ and $\lambda\in\Lambda$, and
$|P_B|=|\det B|.$
Consequently, the translates $P_B+\lambda$, $\lambda\in\Lambda$, partition $\R^d$. This decomposition motivates the corresponding notion for general measurable sets.

A measurable set $E\subseteq\R^d$ tiles with $\Lambda$ at a constant level if
\[
\sum_{\lambda\in\Lambda}\mathbf 1_E(x-\lambda)=\text{const.},
\quad \text{for almost every }x\in\R^d.
\]
In this case, the family
$E+\Lambda:=\{E+\lambda:\lambda\in\Lambda\}$
is called a tiling of $\R^d$ at a constant level. 
When the constant is $1$, the translates
$\{E+\lambda:\lambda\in\Lambda\}$
cover $\R^d$ up to a null set, while any two distinct translates overlap only on a set of measure zero. In this case, $E$ is a measurable fundamental domain for $\Lambda$, up to null sets.

More generally, a function  $f\in L^1(\R^d)$ is said to tile with $\Lambda$ at a constant level if
\[
\sum_{\lambda\in\Lambda} f(x-\lambda)=\text{const.},\quad\text {for almost every } x\in\R^d.
\]
We usually say that $f+\Lambda$ is a tiling of $\R^d$. When $f=\mathbf 1_E$ for a measurable set $E$, this reduces to the corresponding translational tiling problem for measurable sets.
Throughout the paper, the tiling level is
assumed to be nonzero.

\subsection{Common fundamental domains for finitely many lattices}
For each fixed rotation $R_\theta$, the Steinhaus condition requires $E$ to be a fundamental domain for $R_\theta\Z^2$. Replacing the full family of rotated lattices by a finite collection leads to the problem of finding a common fundamental domain for several lattices.
Let $
\Lambda_1,\ldots,\Lambda_N\subseteq\R^d$
be full-rank lattices. If a measurable set $E$ is a fundamental domain for each
$\Lambda_i$, then its Lebesgue measure satisfies
$|E|=\text{vol}(\Lambda_i),
\ i=1,\ldots,N.$
Thus equality of the lattice volumes is a necessary condition.

Kolountzakis
\cite{kol-multilattice-1997internat}
proved that lattices $\Lambda_1,\ldots,\Lambda_N$ of equal volume admit a measurable common fundamental domain provided that
$\Lambda_1^\ast+\cdots+\Lambda_N^\ast$
is direct. For two lattices, Han and Wang
\cite{hanwang-2001geom}
removed the direct-sum hypothesis and proved that equality of volumes alone is sufficient for the existence of a measurable common fundamental domain. The common fundamental domains constructed by Kolountzakis
\cite{kol-multilattice-1997internat}
and by Han and Wang
\cite{hanwang-2001geom}
are generally unbounded. The existence of a bounded measurable common fundamental domain for an arbitrary pair of full-rank lattices with the same volume therefore remained unresolved. More recently, Grepstad and Kolountzakis
\cite{grepstad-kolountzakis-2026}
showed that boundedness can always be achieved for two full-rank lattices in $\R^d$ having the same volume. The restriction to two lattices is essential. Kolountzakis
\cite{kol-multilattice-1997internat}
constructed three lattices of the same volume in $\R^2$ that admit no common measurable fundamental domain.

\subsection{Common tiling functions for finitely many lattices}
Common fundamental domains constitute a special case of common tiling
functions. The connection is already visible at the level of indicator functions: if $E$ is a measurable fundamental domain for
each of the lattices $\Lambda_1,\ldots,\Lambda_N$, then
$\mathbf 1_E$ tiles with each $\Lambda_i$.

Kolountzakis and Wolff
\cite{kolwolff-1999Mathematika}
showed that a function $f\in L^1(\R^d)$ tiles with $\Lambda$ if and only if $\widehat f$ vanishes at every nonzero point of $\Lambda^*$. Applying this criterion to each lattice, $f$ tiles with all of $\Lambda_1,\ldots,\Lambda_N$ if and only if 
\begin{equation}\label{tile-all-lattices}
  \hat{f}=0\quad \text{on}\ \bigcup_{i=1}^{N}\Lambda_i^*\setminus\{0\}.  
\end{equation}

The Fourier characterization immediately yields a general convolution construction. Let
$f_i\in L^1(\R^d)$ tile with $\Lambda_i$, and set
\[
f=f_1\ast\cdots\ast f_N.
\]
Since
$\widehat f=\prod_{i=1}^{N} 
\widehat f_i$ and each $\widehat f_i$ vanishes at every nonzero point of $\Lambda_i^*$, the function $\widehat f$ vanishes at every nonzero point of each dual lattice $\Lambda_i^*$. By the preceding criterion, $f$ tiles with all of $\Lambda_1,\ldots,\Lambda_N$.
In particular, let $D_i$ be a bounded measurable
fundamental domain for $\Lambda_i$ and take
$f_i=\mathbf 1_{D_i}$. Then
\begin{equation}\label{convolution}
f
=
\mathbf 1_{D_1}*\cdots *\mathbf 1_{D_N}
\end{equation}
belongs to $L^1(\R^d)$ and is a nonzero common tiling function for
$\Lambda_1,\ldots,\Lambda_N$. Thus, even when a finite family of
lattices admits no measurable common fundamental domain, it still admits a common
tiling function. 
Moreover, if the fundamental domains $D_i$ can be chosen so that their diameters are uniformly bounded by a constant $C$ independent of $N$, then the convolution function defined in \eqref{convolution} satisfies
\[
\diam\supp f\leq C\cdot N.
\]
On the other hand, Kolountzakis and Wolff
\cite{kolwolff-1999Mathematika}
proved that there exists a positive constant $C_d$ such that, if
$\Lambda_1,\ldots,\Lambda_N\subseteq\ R^d$ have volume one and satisfy
\begin{equation}\label{disjoint-of-Lambda}
\Lambda_i\cap\Lambda_j=\{0\},
\quad \text{for all } i\neq j,
\end{equation}
then every nonzero function $f\in L^1(\R^d)$ that tiles with all the lattices satisfies
\begin{equation}\label{lower bound}
\diam\supp f\geq C_d N^{1/d}.
\end{equation}
Thus, for lattice families of volume one satisfying the pairwise trivial-intersection condition, the lower bound is of order $N^{1/d}$. However, it is also shown in \cite{kolwolff-1999Mathematika} that, no matter how the
fundamental domains $D_i$ are chosen, the construction
\eqref{convolution} gives
\[
\diam\supp f \ge c_d\cdot N,
\]
where $c_d$ depends only on the dimension $d$.
Motivated by the gap between these two bounds, Kolountzakis and Papageorgiou \cite{kolPapageorgiou-functions-2022jfaa} leave the following question open:

\begin{question}\label{question}
Can the gap between the lower bound \eqref{lower bound} and the linear upper bound $O(N)$ achievable by the convolution tile \eqref{convolution} be bridged? Are there examples of $\Lambda_i$, $i=1,2,\ldots,N$, satisfying \eqref{disjoint-of-Lambda} and a nonzero function $f\in L^1(\R^d)$ that tiles with all $\Lambda_i$ and such that
\[\diam \supp f=o(N)?\]
\end{question}
We give an affirmative answer to this question. More precisely, for every fixed $d\ge 2$, we construct lattice families satisfying the pairwise intersection condition and a common tile whose support diameter is of order $N^{1/d}$.

\subsection{Main results}
We now state our main results. When $d=1$, the only lattice of volume 1 is $\Z$. In this case, $f=\mathbf 1_{[0,1)}$ tiles with $\Z$, and $\diam\supp f=1$. Therefore we will only consider results for $d\ge 2$. 

\begin{theorem}\label{1/d}
For $d\ge 2$ and each $N\ge 2$, there exist $N$ lattices $\Lambda_1,\ldots ,\Lambda_N\subseteq \R^d$ of volume $1$ satisfying
\[\Lambda_i\cap\Lambda_j=\{0\}, \quad  \text{for  all } \ i\ne j,\]
and a nonnegative function $f\in L^{1}(\R^d)$ that tiles with all $\Lambda_i$ and satisfies
\begin{equation}%\label{optimal}
\diam \supp f= O(N^{1/d}).  \nonumber 
\end{equation}
\end{theorem}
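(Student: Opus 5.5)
The plan is to work entirely on the Fourier side, using the Kolountzakis--Wolff criterion \eqref{tile-all-lattices}. I would take $f$ to be the indicator of a single large cube, $f=\mathbf 1_{[0,M)^d}$ with $M\asymp N^{1/d}$, which is nonnegative and has $\diam\supp f=\sqrt d\,M=O(N^{1/d})$. Its transform
\[
\widehat f(\xi)=\prod_{k=1}^d \frac{e^{-2\pi i M\xi_k}-1}{-2\pi i\xi_k}
\]
vanishes at every $\xi$ having at least one coordinate in $\tfrac1M\Z\setminus\{0\}$. The task then becomes: find $N$ volume-one lattices $\Lambda_i$, with pairwise trivial intersections, such that every nonzero point of each $\Lambda_i^*$ has some coordinate in $\tfrac1M\Z\setminus\{0\}$.

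For the dual lattices I would use a triangular shape. Set $\Lambda^*=C\Z^d$ with basis vectors $v_1,\dots,v_d$, and fix a permutation $j_1,\dots,j_d$ of the coordinates. Require $(v_k)_{j_k}\in\tfrac1M\Z\setminus\{0\}$ and $(v_l)_{j_k}=0$ for all $l>k$; all other entries are free real parameters. If $\xi=\sum n_lv_l\ne0$ and $k$ is the first index with $n_k\neq0$, then $\xi_{j_k}=n_k(v_k)_{j_k}\in\tfrac1M\Z\setminus\{0\}$, so $\widehat f(\xi)=0$. Choosing the diagonal entries as $m_k/M$ with $\prod m_k=M^d$ gives $\mathrm{vol}(\Lambda^*)=1$, hence $\mathrm{vol}(\Lambda)=1$. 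The free off-diagonal entries form a continuum of parameters. For each pair $i\neq j$ and each nonzero $x\in\Z^d$, a nontrivial intersection $\Lambda_i\cap\Lambda_j\neq\{0\}$ imposes an algebraic condition on these parameters. So I would choose the parameters successively, outside a countable union of proper subvarieties, to get $\Lambda_i\cap\Lambda_j=\{0\}$.

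The main obstacle, which I expect to be decisive, is that this genericity argument does not fully apply. Since $C$ is triangular up to a coordinate permutation, so is $\Lambda=C^{-T}\Z^d$. Consequently one basis vector of $\Lambda_i$ is a rational multiple of a fixed coordinate axis: explicitly, $(M/m_{1})e_{j_d}$-type vectors. Any two lattices built in the same coordinate frame therefore share a nonzero vector on that axis, because their lengths $M/m$ and $M/m'$ have a common multiple. Pairwise trivial intersection is then destroyed, however generic the remaining entries are.

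To get around this, I would first try enlarging the zero set while keeping the support of size $O(M)$. One option is $f=\mathbf 1_{Q}*\mathbf 1_{R_2Q}*\cdots*\mathbf 1_{R_KQ}$ with a bounded number $K$ of rotated cubes. Another is to replace the cube by a parallelepiped whose zero set is a union of $d$ families of parallel hyperplanes that are not all rational relative to each other. The aim is to let the "first nonzero coefficient'' of different lattices be detected by different hyperplane families, so that the forced axis vector of $\Lambda_i$ lies on lines that vary with $i$. Making this compatible with only $O(1)$ convolution factors, while $N\to\infty$, is the step I expect to need a genuinely new idea, for example an arithmetic choice of the diagonal integers $m_k$ together with a zero set that is richer than that of a single cube.
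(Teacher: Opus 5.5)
Your proposal stops short of a proof. You correctly show that a single cube $\mathbf 1_{[0,M)^d}$ cannot work, but the repair is left open, and the repair you sketch cannot succeed.

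Suppose $f$ is a convolution of $K$ parallelepipeds with edge vectors $y_1,\dots,y_{dK}$. Then $\widehat f(\xi)=0$ only if $\langle\xi,y_j\rangle\in\Z\setminus\{0\}$ for some $j$. So if $f$ tiles with $\Lambda$, then $\Lambda^*$ is the union of the subgroups $G_j=\{\xi\in\Lambda^*:\langle\xi,y_j\rangle\in\Z\}$. A lattice is not a finite union of subgroups of lower rank, so some $G_j$ has finite index $m$ in $\Lambda^*$. Then $y_j\in G_j^*$ and $[G_j^*:\Lambda]=m$, so $my_j\in\Lambda$. Two lattices attached to the same $j$ therefore share the nonzero vector $mm'y_j$, and by pigeonhole this happens as soon as $N>dK$.

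This is the general form of the axis-vector obstruction you found. It shows that with cubes, rotated cubes or parallelepipeds, the number of convolution factors must grow at least linearly in $N$. Insisting on $O(1)$ factors is exactly what blocks you.

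The missing idea, used in the paper, is to take $N+1$ factors, all but one of them tiny. For $d=2$ the paper sets $\Lambda_i=B_i\Z^2$ with $B_i=\begin{pmatrix}a_i&b\\0&1/a_i\end{pmatrix}$. Here the $a_i\asymp\sqrt N$ are pairwise incommensurable, so $\Lambda_i\cap\Lambda_j=\{0\}$ by the triangular argument, and $b\asymp N^{-1/2}$ is the same for all $i$.

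Then $v_i=(b,1/a_i)^T\in\Lambda_i$ has length $O(N^{-1/2})$, and $\Lambda_i^*=\{(p/a_i,\,-bp+a_iq)^T:p,q\in\Z\}$ with $\langle\xi,v_i\rangle=q$. A parallelogram $P_i$ spanned by $v_i$ and a short $w_i$ kills every dual point with $q\neq0$. The leftover points $p(1/a_i,-b)^T$ all have second coordinate in $b\Z\setminus\{0\}$, so one common cube $[0,1/|b|)^2$ kills them. Hence $\tau=\mathbf 1_{[0,1/|b|)^2}*\mathbf 1_{P_1}*\cdots*\mathbf 1_{P_N}$ tiles with every $\Lambda_i$, and $\diam\supp\tau\lesssim\sqrt N+N\cdot N^{-1/2}\lesssim\sqrt N$.

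Two features make this work. First, the $P_i$ have area $\lesssim 1/N$ and are not fundamental domains, which is how the $\Omega(N)$ bound for convolutions of fundamental domains is avoided. Second, the cube constrains only the common $b$ and leaves the axis lengths $a_i$ free, which is exactly the freedom your single-cube ansatz lacked.

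For $d>2$ the paper lifts this construction. It takes $\Lambda_i=\begin{pmatrix}\alpha B_i&S_i\\0&\beta I_{d-2}\end{pmatrix}\Z^d$ with $\alpha=N^{(2-d)/(2d)}$, $\beta=N^{1/d}$, and generic $S_i$ chosen to give pairwise trivial intersections. The common tile is $f(x,y)=\tau(x/\alpha)\,\mathbf 1_{[0,\beta)^{d-2}}(y)$. It tiles each $\Lambda_i$ by the product lemma for block-triangular lattices, and its support diameter is $\lesssim\alpha\sqrt N+\beta\lesssim N^{1/d}$.
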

The proof of this theorem proceeds as follows. We start from a family of plane lattices that intersect only at the origin. By means of auxiliary matrices, we then construct higher‑dimensional lattices of volume 1 with the required properties and the required common tiling function. The two‑dimensional construction is particularly crucial for completing the proof. We shall therefore next consider a result concerning families of plane lattices.

\begin{theorem}\label{-close}
Let $s\ge 1$ and $\omega_1,\ldots,\omega_s\in (0,+\infty)$ be fixed independently of $N$, and set
$\mathcal S=\{\omega_1,\ldots,\omega_s\}.$ Let
$L_1,\ldots,L_N\subseteq \R^2$
be lattices satisfying
$\text{vol}(L_i)\in \mathcal S$, $1\le i\le N.$
Choose initial matrices $A_i\in \mathrm{GL}(2,\R)$ such that
$L_i=A_i\Z^2.$
Then, for every $\varepsilon>0$, there exist matrices
$
B_i\in \mathrm{GL}(2,\R),$
$C_i\in \mathrm{SL}(2,\Z),$ and a nonnegative function $f\in L^1(\R^2)$ such that, with
$\Lambda_i=B_i\mathbb Z^2$,
\[
\det B_i=\det A_i,
\quad
\|B_i-A_iC_i\|_\infty\le \varepsilon,
\quad
\Lambda_i\cap\Lambda_j=\{0\}\quad(i\ne j),
\]
and $f$ tiles with $\Lambda_1,\ldots,\Lambda_N$ and satisfies
\[
\diam\supp f=O(\sqrt N).
\]  
\end{theorem}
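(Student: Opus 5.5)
The plan is to exploit the fact that the prescribed volumes take only $s$ values. For each $\omega\in\mathcal S$ fix one reference lattice of volume $\omega$, say $M_\omega=\mathrm{diag}(\omega,1)\Z^2$. The strategy is to perturb each $A_iC_i$ into a lattice $\Lambda_i$ whose bases are short and which contains a common large sublattice shared by a whole group of lattices. The common tile is then a convolution of only a few indicator functions of fundamental domains, each of diameter $O(\sqrt N)$. The mechanism is the following: if $\Gamma\subseteq\Lambda_i$ is a sublattice, then any $g$ that tiles with $\Gamma$ also tiles with $\Lambda_i$. The reason is that $\Lambda_i^*\subseteq\Gamma^*$, so $\widehat g$ vanishes on $\Lambda_i^*\setminus\{0\}$. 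So it suffices to find a few lattices $\Gamma_\omega$ (one per volume class), each contained in all $\Lambda_i$ of that class. The function
\[
f=\mathbf 1_{D_{\omega_1}}*\cdots*\mathbf 1_{D_{\omega_s}},
\]
with $D_\omega$ a fundamental parallelogram of $\Gamma_\omega$, then tiles with every $\Lambda_i$ and has support diameter at most $\sum_\omega\diam D_\omega$.

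\emph{Step 1 (reduction to bounded bases).} By Lagrange--Gauss reduction, choose $C_i\in\mathrm{SL}(2,\Z)$ so that $A_iC_i$ is a reduced basis. Since the volumes lie in the finite set $\mathcal S$, and since we may first perturb $A_iC_i$ within $\varepsilon$, we can treat each reduced basis as a point of a compact-ish region. The reduced basis may still be long and thin, which is a difficulty I return to in Step 3.

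\emph{Step 2 (common sublattice).} Take $\Gamma_\omega=\sqrt{\omega}\,Q\Z^2$ with $Q$ of order $\sqrt N$. I want each $\Lambda_i$ of volume $\omega$ to be an index-$Q^2$ superlattice of $\Gamma_\omega$, that is,
\[
\Lambda_i=\tfrac1Q\,\Gamma_\omega+\text{(twist)},\qquad \Lambda_i=\sqrt\omega\,\tfrac1Q\,T_i\Z^2,
\]
with $T_i$ an integer matrix of determinant $Q^2$ that contains $Q\Z^2$. The superlattices of $\tfrac1Q\Z^2$-type of index $Q^2$ over $Q\Z^2$ are parametrized by integer matrices. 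Their bases $\frac{\sqrt\omega}{Q}T_i$ form a set that is $O(1/Q)$-dense among volume-$\omega$ bases in any bounded region: the rational matrices with denominator $Q$ and determinant exactly $\omega$ (after scaling) are dense at scale $1/Q$. So for $Q\gtrsim 1/\varepsilon$ each $A_iC_i$ can be approximated within $\varepsilon$, with determinant preserved exactly. Making the $N$ lattices \emph{distinct} requires $Q^2\gtrsim N$, which is what forces $\diam D_\omega\asymp\sqrt N$.

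\emph{Step 3 (trivial intersections — the main obstacle).} Lattices sharing the common sublattice $\Gamma_\omega$ obviously intersect nontrivially, so the naive scheme above contradicts $\Lambda_i\cap\Lambda_j=\{0\}$. The fix I would try is to replace a common sublattice by common tiling through a full-rank but \emph{non-lattice} structure. Concretely, I would require only that each $\Lambda_i^*$ be contained in $Z(\widehat g)$, where $g=\mathbf 1_{[0,a]}\otimes\mathbf 1_{[0,b]}$ composed with a few such boxes. Since $Z(\widehat{\mathbf 1_{[0,a]^2}})$ is the union of the lines $\xi_1\in a^{-1}(\Z\setminus\{0\})$ and $\xi_2\in a^{-1}(\Z\setminus\{0\})$, it suffices to choose $\Lambda_i^*$ so that every nonzero dual vector has one coordinate in $a^{-1}\Z\setminus\{0\}$, or so that the dual vectors lie on a few such line families (one per convolution factor, with rotated boxes). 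For example, the dual lattice $\Lambda_i^*=\{(m/a,\ n\beta_i+m\gamma_i)\}$ has first coordinate in $a^{-1}\Z$. When $m=0$ the points lie on the $\xi_2$-axis, and these must be killed by a second box of side $b$ with $\beta_i\in b^{-1}\Z$. Choosing the $\gamma_i$ irrational and rationally independent makes the $\Lambda_i$ pairwise trivially intersecting, while the shears $\gamma_i$ can be adjusted within $\varepsilon$. The remaining question is whether $a,b=O(\sqrt N)$ suffice to approximate all the $A_iC_i$ with distinct lattices. Here the discreteness of the admissible $(1/a,\beta_i)$ pairs, together with density arguments, must be checked against the perturbation size. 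I expect this counting/approximation step, handled per volume class and using rotated boxes to cover arbitrary directions, to be the crux.
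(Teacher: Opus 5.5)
\textbf{There is a genuine gap.} The proposal never reaches a construction with $\Lambda_i\cap\Lambda_j=\{0\}$, and the mechanism of Step~3 cannot produce one.

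\emph{Your Step~3 candidate fails.} Take $\Lambda_i^*=\{(m/a,\,n\beta_i+m\gamma_i)^T: m,n\in\Z\}$. All its first coordinates lie in $a^{-1}\Z$, which says exactly that $\langle (a,0)^T,\xi\rangle\in\Z$ for every $\xi\in\Lambda_i^*$, i.e.\ $(a,0)^T\in\Lambda_i$. Explicitly, $\Lambda_i$ is spanned by $(a,0)^T$ and $(-a\gamma_i/\beta_i,\,1/\beta_i)^T$. So all these lattices share $(a,0)^T$, whatever the $\gamma_i$ are; irrational shears do not help.

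\emph{This is not a fixable detail.} Suppose $f$ is a convolution of $K$ parallelogram indicators with side vectors $p_k,q_k$. Then $\Lambda_i^*$ is the union of the $2K$ subgroups $\{\xi\in\Lambda_i^*:\langle\xi,p_k\rangle\in\Z\}$ and $\{\xi\in\Lambda_i^*:\langle\xi,q_k\rangle\in\Z\}$. By B.~H.~Neumann's covering lemma, one of them has finite index $c_i$, so $c_ip\in\Lambda_i$ for one of the $2K$ side vectors $p$. For $N>2K$, two lattices use the same $p$ and share $c_ic_jp\neq 0$. Hence any scheme with boundedly many boxes (one per volume class, rotated or not) is ruled out, as is your Step~2 common sublattice. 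At least $N/2$ factors are needed, so they must typically have diameter $O(N^{-1/2})$, and each lattice needs its own short vector.

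\textbf{The missing idea, and what the paper does.} Each $\Lambda_i$ is built to contain a primitive vector $v(d_i)=(r,d_i)^T$ with a \emph{common} first coordinate $r\asymp N^{-1/2}$ and $|d_i|\lesssim N^{-1/2}$. By Lemma~\ref{disjoint}, $\Lambda_i^*$ splits into two parts:
\begin{enumerate}
\item the points with $\langle\xi,v(d_i)\rangle\in\Z\setminus\{0\}$, killed by the tiny parallelogram $P_i$ spanned by $v(d_i)$ and a short $u_i$;
\item the line $\frac{\Z}{\omega}(-d_i,r)^T$, whose second coordinates lie in $\frac{r}{\omega}\Z$ and are killed by one square of side $\omega/|r|\asymp\sqrt N$ per volume class.
\end{enumerate}
The total diameter is then $O(\sqrt N)+N\cdot O(N^{-1/2})=O(\sqrt N)$.

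\textbf{Your Step~1 worry is also misplaced.} Reduced bases of arbitrary $L_i$ are not confined to a compact set. But no compactness or density at scale $1/Q$ is needed, because $C_i$ may have huge entries, which makes entrywise $\varepsilon$-closeness a weak constraint. The paper proceeds as follows:
\begin{enumerate}
\item It picks $C_i$ with $A_iC_i(k_i',1)^T=a_i$ for some $a_i\in L_i$ and $k_i'$ large.
\item It perturbs $A_iC_i$ so that the new basis maps $(k_i',1)^T$ to $v(d_i)$. At the base point only the first column changes, by $(v(d_i)-a_i)/k_i'=O(1/k_i')$.
\item It restores $\det B_i=\det A_i$ with the correction $\psi$.
\item It chooses $(d_i,t_i)$ in small intervals, avoiding countably many bad values, to get pairwise trivial intersections.
\end{enumerate}
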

This means that given any lattices
$L_i$ we can find lattices $\Lambda_i$, such that there exists a basis of $L_i$ and a basis of $\Lambda_i$ that are
$\varepsilon$-close, and such that the $\Lambda_i$ have a common tiling function with small diameter.

\section{The Proof of Theorem \ref{1/d}}\label{sec2}
To prove Theorem \ref{1/d}, we establish and develop several auxiliary results. They concern, respectively, the preservation of the tiling property and the construction of lattices with pairwise trivial intersections.

\subsection{Tiling constructions}
Let $\mathbf 0_n=(0,\ldots,0)$ denote the zero vector in $\R^n$, and let
$$
\pi_2:\R^m\times\R^n\longrightarrow\R^n,
\quad
\pi_2(x,y)=y,
$$
be the canonical projection onto the second factor. We identify a
lattice $L\subseteq\R^m$ with
$L\times\{\mathbf 0_n\}\subseteq\R^{m+n}.$ The following lifting lemma allows us to pass from tilings in the factors to a tiling in the product.

\begin{lemma}\label{lm:projection}
Let $d=m+n$, $L \subseteq \R^m$, $\Gamma \subseteq \R^n$ be lattices. Assume $g:\R^m\to\R$ tiles with $L$ at level $c_1$ and $h:\R^n\to\R$ tiles with $\Gamma$ at level $c_2$. Assume also that $M'$ is a lattice of rank $n$ in $\R^d$ such that $\pi_2(M') = \Gamma$.

Then $f(x, y) = g(x) h(y)$ tiles with $L\times\{\mathbf 0_n\} +M'$ at level $c_1c_2$.
\end{lemma}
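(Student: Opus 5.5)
Write $\Lambda := L\times\{\mathbf 0_n\}+M'$. The plan is to first check that $\Lambda$ is a full-rank lattice in $\R^d$ that decomposes as a direct sum. Then the tiling sum is computed by splitting it into an inner sum over $L$ and an outer sum over $\Gamma$.

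\textbf{Structure of $\Lambda$.} Since $\pi_2(M')=\Gamma$ has rank $n$ and $M'$ has rank $n$, the restriction $\pi_2|_{M'}:M'\to\Gamma$ is a surjective homomorphism between free abelian groups of the same rank $n$. It is therefore injective, hence an isomorphism. So for every $\gamma\in\Gamma$ there is a unique $m(\gamma)\in M'$ with $\pi_2(m(\gamma))=\gamma$. We write $m(\gamma)=(\phi(\gamma),\gamma)$, where $\phi:\Gamma\to\R^m$ is additive.

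We claim that every element of $\Lambda$ can be written uniquely as $(\ell+\phi(\gamma),\gamma)$ with $\ell\in L$ and $\gamma\in\Gamma$. Uniqueness holds because the second coordinate determines $\gamma$, and then $\ell$ is determined. Taking a basis of $L$ together with a basis of $M'$ gives $d$ vectors. In block form their matrix is upper triangular, with blocks the basis matrix of $L$ and that of $\Gamma$, so its determinant is nonzero. Hence $\Lambda$ is a full-rank lattice and the sum $L\times\{\mathbf 0_n\}+M'$ is direct.

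\textbf{Computing the tiling sum.} For a.e. $(x,y)$ we have
\[
\sum_{\lambda\in\Lambda}f((x,y)-\lambda)=\sum_{\gamma\in\Gamma}h(y-\gamma)\sum_{\ell\in L}g\bigl(x-\phi(\gamma)-\ell\bigr).
\]
The inner sum equals $c_1$ for every $\gamma$ whenever $x-\phi(\gamma)$ lies outside a null set $Z\subseteq\R^m$. Since $\Gamma$ is countable, the set $\bigcup_{\gamma}(Z+\phi(\gamma))$ is null, so for a.e. $x$ the inner sum equals $c_1$ for all $\gamma$ simultaneously. The expression then reduces to $c_1\sum_{\gamma}h(y-\gamma)=c_1c_2$ for a.e. $y$. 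By Fubini, the exceptional set in $\R^d$ is null.

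\textbf{Main obstacle: justifying the rearrangement.} The regrouping of the double sum needs absolute convergence. First, $\sum_{\lambda}|f((x,y)-\lambda)|$ is finite a.e., because its integral over a fundamental domain equals $\|f\|_1=\|g\|_1\|h\|_1<\infty$. Second, the inner sums $\sum_{\ell}|g(x-\phi(\gamma)-\ell)|$ are finite for a.e. $x$ and all $\gamma$ simultaneously, again using countability of $\Gamma$. This justifies Fubini–Tonelli for the double series. With that in place, the computation above gives that $f$ tiles with $\Lambda$ at level $c_1c_2$.
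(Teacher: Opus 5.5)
Your proposal is correct and follows the paper's own route. Both arguments use that $\pi_2|_{M'}$ is a bijection onto $\Gamma$ to see that $L\times\{\mathbf 0_n\}+M'$ is a direct sum, then evaluate the periodization as an inner sum over $L$ (giving $c_1$) and an outer sum over $M'\cong\Gamma$ (giving $c_2$). Your extra checks (that $\Lambda$ is full rank, and the null-set and absolute-convergence justification of the rearrangement) supply details the paper's computation leaves implicit.
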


\begin{proof}
Since $\pi_2(M') = \Gamma$ and $M'$ and $\Gamma$ are lattices of the same rank $n$, the restriction $\pi_2|_{M'}$ maps $M'$ bijectively onto $\Gamma$. Hence
\[(L\times\{\mathbf 0_n\})\cap M' =\{0\}.\]
It follows that the sum $L\times\{\mathbf 0_n\}+M'$ is direct. Therefore,
\begin{align*}
\sum_{n' \in L\times\{\mathbf 0_n\}, m'=(m_1', m_2') \in M'} f((x, y)-n'-m') &= \sum_{\ell \in L, m' \in M'} f(x-\ell-m_1', y-m_2') \\
 &= \sum_{\ell \in L, m' \in M'} g(x-\ell-m_1') h(y-m_2')\\
 &= \sum_{m' \in M'} \sum_{\ell \in L} g(x-\ell-m_1') h(y-m_2') \\
 &=c_1\sum_{m' \in M'} h(y-m_2')\\
 &=c_1\sum_{m \in \Gamma} h(y-m)\\
 &= c_1 c_2.
\end{align*}
This shows that $f$ tiles with $L\times\{\mathbf 0_n\}+M'$ at level
$c_1c_2$, thereby completing the proof.
\end{proof}

\begin{corollary}\label{cor:skew}
Let $d=m+n$, $L = A\Z^m \subseteq \R^m$, $\Gamma = T\Z^n \subseteq \R^n$ be lattices. Assume $g:\R^m\to\R$ tiles with $L$ at level $c_1$ and $h:\R^n\to\R$ tiles with $\Gamma$ at level $c_2$. Define the matrix
$$
B = \begin{pmatrix} A & S \\ 0 & T \end{pmatrix}
$$
where $S\in \R^{m\times n}$. Then $\Lambda = B\Z^d$ is a lattice and $f(x, y) = g(x) h(y)$ tiles with $\Lambda$ at level $c_1c_2$.
\end{corollary}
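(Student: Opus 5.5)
The plan is to reduce Corollary~\ref{cor:skew} to Lemma~\ref{lm:projection}. We identify $L=A\Z^m$ with $L\times\{\mathbf 0_n\}$ and introduce an auxiliary rank-$n$ lattice $M'$ built from the last $n$ columns of $B$. Concretely, set
\[
M'=\begin{pmatrix} S\\ T\end{pmatrix}\Z^n\subseteq\R^d .
\]
Since $T\in\mathrm{GL}(n,\R)$, the $d\times n$ matrix $\binom{S}{T}$ has rank $n$. Hence $M'$ is a discrete subgroup of rank $n$, that is, a lattice of rank $n$ in $\R^d$. Its image under $\pi_2$ is $\pi_2(M')=T\Z^n=\Gamma$.

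Next, we identify $\Lambda$ with $L\times\{\mathbf 0_n\}+M'$. Writing $k=(k_1,k_2)\in\Z^m\times\Z^n$, we have
\[
Bk=\begin{pmatrix}Ak_1\\0\end{pmatrix}+\begin{pmatrix}Sk_2\\Tk_2\end{pmatrix}.
\]
This gives $B\Z^d=L\times\{\mathbf 0_n\}+M'$. We also need $\Lambda$ to be a full-rank lattice. This follows because $B$ is block upper triangular, so $\det B=\det A\det T\neq0$ and $B\in\mathrm{GL}(d,\R)$.

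Finally, Lemma~\ref{lm:projection} applies with these choices of $L$, $\Gamma$ and $M'$. It shows that $f(x,y)=g(x)h(y)$ tiles with $L\times\{\mathbf 0_n\}+M'=\Lambda$ at level $c_1c_2$.

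The argument has no serious obstacle. The only points needing care are the rank-$n$ claim for $M'$, which holds because $T$ is invertible, and the equality of $\Lambda$ with the sum, which is the block decomposition of $B$ above.
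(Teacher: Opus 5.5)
Your proposal is correct and follows the paper's proof: you take $M'=\binom{S}{T}\Z^n$, observe that $\pi_2(M')=\Gamma$ and $\Lambda=L\times\{\mathbf 0_n\}+M'$, and then invoke Lemma~\ref{lm:projection}. Your explicit checks that $M'$ has rank $n$ and that $\det B=\det A\det T\neq 0$ are left implicit in the paper, but they do not change the argument.
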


\begin{proof}
Define the rank $n$ lattice $M' = \begin{pmatrix} S\\T\end{pmatrix} \Z^n$. Then $\Lambda = L\times\{\mathbf 0_n\}+M'$, $\pi_2(M') = \Gamma$ and the result follows from Lemma \ref{lm:projection}. This completes the proof.
\end{proof}

\begin{corollary}\label{cor:collection}
Let $d=m+n$, with $m, n>0$, $N\ge 1$ and let $L_i$, $i=1,\ldots,N$, be lattices in $\R^m$ and $g:\R^m\to\R$ be a common tiling function for the $L_i$. Let also $M'_i$ be rank $n=d-m$ lattices in $\R^d$ such that $\pi_2(M'_i) = c\Z^n$ with $c$ a nonzero constant. Then the function $f(x, y) = g(x) h(y)$ tiles with all $L_i\times\{\mathbf 0_n\} +M'_i$, where $h$ is any function that tiles with $c\Z^n$.
\end{corollary}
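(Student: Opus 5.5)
The plan is to apply Lemma \ref{lm:projection} to each index $i$ separately, with the common ingredients $g$, $h$ and $\Gamma=c\Z^n$. Fix $i\in\{1,\ldots,N\}$. By hypothesis, $g$ tiles with $L_i$ at some level $a_i$. The statement does not require these levels to agree across $i$, so we only need a constant for each $i$; they will in fact all equal $\int g$. Next, $h$ tiles with $c\Z^n$ at some level $b$. Since $c\neq 0$, the set $c\Z^n$ is a full-rank lattice in $\R^n$, so the hypothesis $\pi_2(M_i')=c\Z^n$ places us exactly in the setting of Lemma \ref{lm:projection}, with $L=L_i$, $\Gamma=c\Z^n$ and $M'=M_i'$.

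The lemma then gives that $f(x,y)=g(x)h(y)$ tiles with $L_i\times\{\mathbf 0_n\}+M_i'$ at level $a_ib$. As in the proof of the lemma, this sum is direct and is a full-rank lattice in $\R^d$. We do this for every $i$; the function $f$ does not depend on $i$, because $g$ and $h$ were chosen independently of $i$. Hence $f$ is a common tiling function for all the lattices $L_i\times\{\mathbf 0_n\}+M_i'$.

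The only point that needs care is that the level is nonzero, since the paper assumes nonzero tiling levels throughout. We have $a_i\neq 0$ and $b\neq 0$ by that standing convention, so $a_ib\neq 0$. If one wants a uniform level, integrating the tiling identity over a fundamental domain of $L_i$ gives $a_i=\int g/\mathrm{vol}(L_i)$. There is no real obstacle: the corollary is a direct instance of the lemma, applied once for each $i$ with the same pair $(g,h)$.
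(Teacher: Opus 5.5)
Your proposal is correct and is essentially the paper's argument: the paper deduces the corollary from Corollary~\ref{cor:skew}, which is Lemma~\ref{lm:projection} in matrix form, and you apply Lemma~\ref{lm:projection} directly for each $i$ with the same pair $(g,h)$. One harmless slip: the levels $a_i=\int g/\mathrm{vol}(L_i)$ need not all equal $\int g$, or even each other, unless the $L_i$ have equal (unit) volume, but the corollary does not need them to.
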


\begin{proof}
It follows immediately from Corollary \ref{cor:skew}.
\end{proof}

Recall that a nonzero function $f\in L^1(\R^d)$ tiles with each of the full-rank lattices $\Lambda_1,\ldots,\Lambda_N$ if and only if $\hat{f}$ vanishes on $\cup_{i=1}^N \Lambda_i^*\setminus\{0\}$. 
When considering common tiling functions for several lattices, one naturally examines the relations among their duals. The dual of a full‑rank lattice is a full‑rank lattice. The following result shows that when the lattices intersect only at $\{0\}$, the intersection of their duals is not full‑rank.

In this paper, if we say $x\in\R^d$, that is $x=(x_1,\ldots,x_d)^T$, where $x_i$ is the $i$-th coordinate of $x$.
\begin{lemma}\label{rank}
    For $d\ge 2$, assume that $\Lambda_1,\Lambda_2\subseteq \R^d$ are lattices
    of volume 1 satisfying $\Lambda_1\cap \Lambda_2=\{0\}$.
    Then $\rank(\Lambda_1^*\cap\Lambda_2^*)\le d-2$.
\end{lemma}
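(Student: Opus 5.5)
The plan is to argue by contradiction: assume $M:=\Lambda_1^*\cap\Lambda_2^*$ has rank $d$ or $d-1$, and in each case produce a nonzero vector of $\Lambda_1\cap\Lambda_2$. The tool is duality between sublattices of $\Lambda_i^*$ and sublattices of $\Lambda_i$ lying in orthogonal complements.

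\emph{Rank $d$.} Here $M$ is a full-rank sublattice of both $\Lambda_1^*$ and $\Lambda_2^*$, so it has finite index in each. Dualizing reverses inclusions, so $\Lambda_1,\Lambda_2\subseteq M^*$, again with finite index. The intersection of two finite-index subgroups of $M^*\cong\Z^d$ has finite index in $M^*$. Hence $\Lambda_1\cap\Lambda_2$ has rank $d\ge 1$, contradicting $\Lambda_1\cap\Lambda_2=\{0\}$. This case does not use the volume hypothesis.

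\emph{Rank $d-1$.} Let $V=\operatorname{span}_\R M$, a hyperplane, and $\ell=V^\perp$, a line.
\begin{enumerate}
\item Set $K_i:=\Lambda_i^*\cap V$. This is a primitive sublattice of $\Lambda_i^*$ of rank $d-1$, and it contains $M$ with finite index, since $M$ already spans $V$.
\item By the standard duality for primitive sublattices, $\Lambda_i\cap K_i^\perp=\Lambda_i\cap\ell$ is a rank-one lattice $\Z u_i$.
\item The same duality gives the covolume identity
\[
|u_i|=\operatorname{vol}(\Lambda_i)\cdot\operatorname{covol}(K_i)=\operatorname{covol}(K_i),
\]
using $\operatorname{vol}(\Lambda_i)=1$. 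This is the only place the volume hypothesis enters; equal volumes would suffice.
\item Since $M\subseteq K_i$ with finite index, $\operatorname{covol}(K_i)=\operatorname{covol}(M)/[K_i:M]$. Therefore
\[
\frac{|u_1|}{|u_2|}=\frac{[K_2:M]}{[K_1:M]}\in\Q_{>0}.
\]
\item The vectors $u_1,u_2$ lie on the same line $\ell$ and have a rational length ratio, so $u_1=\pm\frac{p}{q}u_2$. Then $q u_1=\pm p u_2$ is a nonzero element of $\Lambda_1\cap\Lambda_2$, which is the desired contradiction.
\end{enumerate}

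The main obstacle is justifying steps (2)–(3): that $\Lambda\cap K^\perp$ has rank $d-\operatorname{rank}K$ and covolume $\operatorname{vol}(\Lambda)\operatorname{covol}(K)$ for a primitive $K\subseteq\Lambda^*$. I would prove it by choosing a basis of $\Lambda^*$ whose first $d-1$ vectors form a basis of $K$, which is possible because $K$ is primitive. The last vector of the dual basis then lies in $\Lambda$ and is orthogonal to $K$. It generates $\Lambda\cap\ell$, and its length equals $\operatorname{covol}(K)/\operatorname{vol}(\Lambda^*)$, since $\operatorname{vol}(\Lambda^*)$ is the product of $\operatorname{covol}(K)$ and the height of the last basis vector over $V$. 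Using $\operatorname{vol}(\Lambda^*)=\operatorname{vol}(\Lambda)^{-1}$, this is $\operatorname{vol}(\Lambda)\operatorname{covol}(K)$.
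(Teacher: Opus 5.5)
Your proof is correct, but it takes a different route from the paper's.

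\textbf{The paper's route.} The paper does not split into the rank-$d$ and rank-$(d-1)$ cases, and it uses no facts about primitive sublattices or covolumes. After normalizing by $A_1$, it picks any $d-1$ independent vectors $\ell_j\in\Z^d\cap C\Z^d$, where $C=A_1^TA_2^{-T}$ and $|\det C|=1$. It forms their generalized cross product $\mathcal V(\ell_1,\ldots,\ell_{d-1})$, which is a nonzero integer vector. The cofactor identity $\mathcal V(Ck_1,\ldots,Ck_{d-1})=\det C\,C^{-T}\mathcal V(k_1,\ldots,k_{d-1})$ then shows that the same vector lies in $C^{-T}\Z^d=A_1^{-1}A_2\Z^d$. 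This gives an explicit nonzero point of $A_1^{-1}(\Lambda_1\cap\Lambda_2)$ in one step.

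\textbf{Your route.} Your argument is more structural. You identify $\Lambda_i\cap V^\perp=\Z u_i$ and compute $|u_i|=\operatorname{vol}(\Lambda_i)\operatorname{covol}(K_i)$ using a dual basis adapted to the primitive sublattice $K_i$. Commensurability of $u_1,u_2$ then follows from the index relation with $M$.

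\textbf{Same point, different path.} In the rank-$(d-1)$ case the two proofs find the same point. Your vector $[K_1:M]u_1=\pm[K_2:M]u_2$ lies on $V^\perp$ and has length $\operatorname{covol}(M)$. So up to sign it is the cross product of a basis of $M$, and the paper's vector, transported back by $A_1$, is an integer multiple of it.

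\textbf{What each buys.} Your version explains the role of the volume hypothesis clearly: it enters only through $|u_i|$, equal volumes suffice, and the full-rank case needs no volume assumption at all. The paper's version is shorter and self-contained. It needs only a determinant identity, instead of the basis-extension property of primitive sublattices and the covolume/index formulas you invoke.

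Your separate rank-$d$ case is also avoidable. Replacing $M$ by any rank-$(d-1)$ sublattice of it lets your rank-$(d-1)$ argument cover that case too, which is effectively what the paper does by choosing just $d-1$ independent vectors.
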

\begin{proof}
   Let
$\Lambda_i=A_i\Z^d,$
where $A_i\in \mathrm{GL}(d,\R),i=1,2.$
Suppose, to the contrary, that we have $\rank(\Lambda_1^*\cap \Lambda_2^*)> d-2$. 
It follows from 
\[\rank(\Lambda_1^*\cap \Lambda_2^*)=\rank(A_1^{T}\Lambda_1^*\cap A_1^{T}\Lambda_2^*)=\rank(\Z^d\cap A_1^{T}A_2^{-T}\Z^d)\]
that
\begin{equation}%\label{rank dual cap}
  \rank(\Z^d\cap C\Z^d)>d-2,\nonumber
\end{equation}
where $C=A_1^{T}A_2^{-T}$.
Choose $d-1$ $\Z$-linearly independent vectors $\ell_1,\dots,\ell_{d-1} \in \Z^d\cap C\Z^d$. Since $\ell_i \in C\Z^d$ for $1\le i\le d-1$,  $k_i:=C^{-1}\ell_i\in \Z^d$, and consequently 
$k_i,\ell_i\in  \Z^d$ for all i.

Given $d-1$ vectors $v_1,\ldots,v_{d-1}$ in $\R^d$, let $\mathcal{V}(v_1,\ldots,v_{d-1})\in \R^d$ be defined by
\[\left \langle \mathcal{V}(v_1,\ldots,v_{d-1}), x  \right \rangle=\det(v_1,\ldots,v_{d-1}, x), \quad \forall x\in \R^d.  \]
Let $U=(\ell_1,\ldots,\ell_{d-1})$ be a $d\times(d-1)$ matrix and $U_k$ denote the $(d-1)\times(d-1)$ matrix obtained from $U$ by deleting $k$-th row. Then we have
\[\left \langle \mathcal{V}(\ell_1,\ldots,\ell_{d-1}), x  \right \rangle=\det (\ell_1,\ldots,\ell_{d-1},x)=\sum_{k=1}^{d}(-1)^{k+d}x_k \det(U_k).\]
Combining this with 
\[\left \langle \mathcal{V}(\ell_1,\ldots,\ell_{d-1}), x  \right \rangle=\sum_{k=1}^{d}\mathcal{V}_k(\ell_1,\ldots,\ell_{d-1})x_k,\]
we obtain
$\mathcal{V}_k(\ell_1,\ldots,\ell_{d-1})=(-1)^{k+d}\det (U_k)$,
where $\mathcal{V}_k(\ell_1,\ldots,\ell_{d-1})$ is the 
$k$-th coordinate of the vector $\mathcal{V}(\ell_1,\ldots,\ell_{d-1})$. 
Since $\rank(U)=d-1$, there exists some k such that $\det(U_k)\ne 0$, which implies that $\mathcal{V}_k(\ell_1,\ldots,\ell_{d-1})\ne 0$. Hence $\mathcal{V}(\ell_1,\ldots,\ell_{d-1})\ne 0$.
Moreover, since each $\ell_i\in\Z^d$, it follows that
$U_k\in \mathrm{M}_{(d-1)\times(d-1)}(\Z)$ for each $k$.
Thus $\det(U_k)\in\Z$ for each $k$, and hence
\begin{equation}\label{v-coordinate}
0\ne \mathcal{V}(\ell_1,\ldots,\ell_{d-1})\in\Z^d.
\end{equation}
Similarly, we have 
\begin{equation}\label{v-k-coordinate}
0\ne\mathcal{V}(k_1,\ldots,k_{d-1})\in \Z ^d.  
\end{equation}

For any $x\in \R^d$, we have
\begin{align*}
    \left \langle \mathcal{V}(\ell_1,\ldots,\ell_{d-1}), x  \right \rangle&=\det(\ell_1,\ldots,\ell_{d-1},x)\\
    &=\det(Ck_1,\ldots,Ck_{d-1},C\cdot C^{-1}x)\\
    &=\det(C)\det(k_1,\ldots,k_{d-1},C^{-1}x)\\
    &=\det(C)\left \langle C^{-T}\mathcal{V}(k_1,\ldots,k_{d-1}), x  \right \rangle.
\end{align*}
Since $A_1$ is invertible, the assumption
$\Lambda_1\cap\Lambda_2=\{0\}$ is equivalent to
\[
A_1^{-1}\Lambda_1\cap A_1^{-1}\Lambda_2
=\Z^d\cap A_1^{-1}A_2\Z^d
=\{0\}.
\]
Moreover, since 
\[|\det C|=\left|\frac{\det A_1}{\det A_2}\right|=\frac{\text{vol}(\Lambda_1)}{\text{vol}(\Lambda_2)}=1,\] 
it follows from \eqref{v-k-coordinate} that
\[0\ne\mathcal{V}(\ell_1,\ldots,\ell_{d-1})=\det C \cdot C^{-T}\mathcal{V}(k_1,\ldots,k_{d-1})\in C^{-T}\Z^{d}=A_1^{-1}A_2\Z^d.\]
Together with \eqref{v-coordinate}, this gives
\[
0\ne \mathcal{V}(\ell_1,\ldots,\ell_{d-1})
\in
\Z^d\cap A_1^{-1}A_2\Z^d,
\]
which is a contradiction. Hence  $\rank(\Lambda_1^*\cap\Lambda_2^* )\le d-2$, and the proof is complete.
\end{proof}

From a rank-theoretic viewpoint, the quantity
\[d-\rank (\Lambda_1^*\cap\Lambda_2^*)\]
measures the number of residual frequency directions not accounted for by the common dual subgroup. Thus, a larger intersection rank leaves fewer directions in which the two tiling conditions must be treated independently, and consequently is more favorable for constructing a common tiling function with small support. In the extremal case
\[\rank (\Lambda_1^*\cap\Lambda_2^*)=d-2\]
only two residual directions remain. This naturally motivates the following study of common tiling functions for plane lattices.

\begin{lemma}\label{tile function}
For $i=1,\ldots,N$ let
\[B_i=\begin{pmatrix}
a_i& b\\ 
c_i& d_i
\end{pmatrix}\in \mathrm{GL}(2,\R)\]
satisfy $|\det B_i|=1$ and $b\ne 0$. Let $\Lambda_i=B_i\Z^2$ and $v_i=(b,d_i)^T$. For each i, choose $w_i\in \R^2$ linearly independent of $v_i$ and define
$P_i=[0,1)v_i+[0,1)w_i$.
Then the function $\tau$ defined by
\[
\tau=|b|^2\mathds 1_{\left[0,1/|b|\right)^2}\ast
\frac{1}{|\det(v_1,w_1)|}\mathds 1_{P_1}\ast\cdots \ast\frac{1}{|\det(v_N,w_N)|}\mathds 1_{P_N}
\]
belongs to $L^1(\R^2)$ and tiles with all lattices $\Lambda_i$.  
\end{lemma}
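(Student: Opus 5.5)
We will verify the tiling property through the Fourier criterion of Kolountzakis and Wolff: an $L^1$ function tiles with $\Lambda_i$ exactly when its Fourier transform vanishes on $\Lambda_i^*\setminus\{0\}$. Integrability is immediate. Each factor in the definition of $\tau$ is a bounded, compactly supported function, hence in $L^1(\R^2)$, and a convolution of finitely many $L^1$ functions is again in $L^1$. Moreover,
\[
\widehat\tau=\widehat{g}\cdot\prod_{j=1}^N \widehat{h_j},
\]
where $g=|b|^2\mathds 1_{[0,1/|b|)^2}$ and $h_j=|\det(v_j,w_j)|^{-1}\mathds 1_{P_j}$. It therefore suffices to show that for every $i$ and every $0\ne\xi\in\Lambda_i^*$, either $\widehat g(\xi)=0$ or $\widehat{h_i}(\xi)=0$.

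Fix $i$ and write $\Lambda_i^*=B_i^{-T}\Z^2$. A point $\xi\in\Lambda_i^*$ is determined by the integers $k_1=\langle\xi,(a_i,c_i)^T\rangle$ and $k_2=\langle \xi, v_i\rangle$, which are the pairings of $\xi$ with the two columns of $B_i$. The factor $\widehat{h_i}$ is handled by a direct computation. Since $P_i$ is the parallelogram spanned by $v_i$ and $w_i$, we have
\[
\widehat{\mathds 1_{P_i}}(\xi)=|\det(v_i,w_i)|\,\phi(\langle \xi,v_i\rangle)\,\phi(\langle\xi,w_i\rangle),
\qquad \phi(t)=\int_0^1 e^{-2\pi i ts}\,ds .
\]
Because $\phi(t)=0$ for every nonzero integer $t$, $\widehat{h_i}(\xi)=0$ whenever $\langle\xi,v_i\rangle\in\Z\setminus\{0\}$. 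For $\xi\in\Lambda_i^*$ the pairing $\langle\xi,v_i\rangle=k_2$ is automatically an integer, so this kills every $\xi$ with $k_2\neq0$.

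It remains to treat $\xi\ne0$ with $\langle\xi,v_i\rangle=0$, and this is the step that uses the shared entry $b$ and the factor $g$. The square $[0,1/|b|)^2$ tiles with $\frac1{|b|}\Z^2$, so $\widehat g$ vanishes on $|b|\Z^2\setminus\{0\}$, which is the dual lattice of $\frac1{|b|}\Z^2$. We must therefore show that $\xi\in|b|\Z^2$. Write $\xi=(\xi_1,\xi_2)$ and $\det B_i=\epsilon\in\{\pm1\}$. Solving $B_i^T\xi=(k_1,0)^T$ gives
\[
\xi=k_1\epsilon\,(d_i,-b)^T .
\]
The second coordinate $-\epsilon k_1 b$ lies in $b\Z$ as required. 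The first coordinate $\epsilon k_1 d_i$ is the problem, since $d_i$ is an arbitrary real number.

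I expect this to be the main obstacle. The natural formula $\widehat{\mathds 1_{[0,1/|b|)^2}}(\xi)=|b|^{-2}\phi(\xi_1/|b|)\,\phi(\xi_2/|b|)$ vanishes as soon as one coordinate is a nonzero multiple of $|b|$. Here the second coordinate gives $\xi_2/|b|=\mp k_1\neq 0$, an integer, because $\xi\ne0$ forces $k_1\ne0$. Hence $\phi(\xi_2/|b|)=0$ and $\widehat g(\xi)=0$, so the arbitrary first coordinate never matters. This coordinatewise vanishing (rather than full dual-lattice membership) is exactly what makes the common entry $b$ suffice, and it finishes the argument, since every $\xi\in\Lambda_i^*\setminus\{0\}$ is killed by either $\widehat{h_i}$ or $\widehat g$.
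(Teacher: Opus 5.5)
Your proof is correct and follows the paper's own argument. Both split $\Lambda_i^*\setminus\{0\}$ according to whether the integer $\langle\xi,v_i\rangle$ is nonzero, in which case $\widehat{\mathds 1_{P_i}}$ kills $\xi$, or zero, in which case $\xi$ is a nonzero integer multiple of $(d_i,-b)^T$; its second coordinate then lies in $b\Z\setminus\{0\}$, so the cube factor vanishes coordinatewise. Your tentative reduction to $\xi\in|b|\Z^2$ is unnecessary, as you yourself notice, and your explicit sign $\epsilon=\det B_i$ is slightly more careful than the paper's formula for $B_i^{-T}$.
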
 
\begin{proof}
First, we observe that $\tau\in L^1(\R^2)$. Indeed, the function
$|b|^2\mathds 1_{[0,1/|b|)^2}$ has $L^1$-norm equal to $1$. Moreover, for each
$i=1,\ldots,N$, since $w_i$ is linearly independent of $v_i$, we have
$\det(v_i,w_i)\neq 0$ and
\[
\left\|\frac{1}{|\det(v_i,w_i)|}\mathds 1_{P_i}\right\|_1
=
\frac{|P_i|}{|\det(v_i,w_i)|}
=1.
\]
Thus all factors in the convolution defining $\tau$ belong to $L^1(\R^2)$, and hence
$\tau\in L^1(\R^2)$ by Young's inequality. 

Computing directly, we obtain the dual lattice of $\Lambda_i$ as
\[\Lambda_i^{\ast}=B_i^{-T}\Z^2
=\begin{pmatrix}
d_i  &  -c_i\\
-b  & a_i
\end{pmatrix}\Z^2.\]
Fix $i$. For any $\xi\in \Lambda_i^{\ast}$, there exist $(p,q)^T\in\Z^2$ such that
\[\xi=B_i^{-T}(p,q)^{T}=(d_ip-c_iq,-bp+a_iq)^T,\]
which, combined with the assumptions, implies
\[\left |\left \langle \xi , v_i  \right \rangle\right|=\left |q(a_id_i-bc_i) \right|=\left | q \right |\in \Z.\]
Thus, for all $(p,q)^T\in \Z^{2}$,
\begin{equation}\label{integer1}
\left |\left \langle B_i^{-T}(p,q)^{T} , v_i  \right \rangle \right|=\left | q \right |.
\end{equation}

For any $\xi\in \Lambda_i^{*}$, we have
\[\widehat{\mathds 1_{P_i}}(\xi)
=\int_{\R^2}\mathds 1_{P_i}(x)e^{-2\pi i \left \langle \xi, x  \right \rangle}dx
=|\det(v_i,w_i)|\int_{0}^{1}e^{-2\pi i k\left \langle \xi, v_i \right \rangle}dk\int_{0}^{1}e^{-2\pi i \ell\left \langle \xi, w_i \right \rangle}d\ell.\]
Combining this with \eqref{integer1}, if $q\in \Z\setminus\{0\}$, then
\begin{equation}\label{zero-i1}
    \widehat{\mathds 1_{P_i}}(\xi)=|\det(v_i,w_i)|\cdot  0\cdot \int_{0}^{1}e^{-2\pi i \ell\left \langle \xi, w_i \right \rangle}d\ell=0.
\end{equation}
A direct calculation shows that the zero set of the Fourier transform of $\mathds{1}_{[0,1/|b|)^2}$ is given by
\begin{equation}\label{cube-zero1}
\left\{
\xi\in\R^2:
\xi_j\in b\Z\setminus\{0\}
\text{ for some } j=1,2
\right\}.
\end{equation}

Now let
$\xi\in\cup_{i=1}^{N}\Lambda_i^{\ast}$. Then
\[\xi=B_i^{-T}(p,q)^{T}=(d_ip-c_iq,-bp+a_iq)^{T}\]
for some i and some $(p,q)^T\in\Z^2$. If $\xi\ne 0$, we have $(p,q)\ne(0,0)$.
If $q\ne 0$, by (\ref{integer1}) and (\ref{zero-i1}), we have
$\widehat{\mathds 1_{P_i}}(\xi)=0$. Otherwise, by (\ref{cube-zero1}), we have $\widehat{\mathds{1}_{\left[0,1/|b|\right)^2}}(\xi)=0$.
It then follows from \eqref{tile-all-lattices} that $\tau$ tiles with all lattices $\Lambda_1,\ldots,\Lambda_N$. 
\end{proof}

\subsection{Lattices with pairwise trivial intersections}
We now turn to the auxiliary results concerning pairwise trivial intersections.

\begin{lemma}\label{lattice-trival-disjoint}
Let $m,n$ be positive integers and let $A_1,\ldots,A_N\in \mathrm{GL}(m,\R)$ satisfy 
\[
A_i\Z^m \cap A_j\Z^m = \{0\}, \quad \text{for all } i \neq j.
\]
Then, for any $T\in \mathrm{GL}(n,\R)$, there exist 
$S_1,\dots,S_N \in \mathrm{M}_{m\times n}(\R)$ such that the lattices
\[
\Lambda_i = \begin{pmatrix} A_i & S_i \\ 0 & T \end{pmatrix} \Z^{m+n}, \quad i=1,\dots,N,
\]
satisfy $\Lambda_i \cap \Lambda_j = \{0\}$ for all $i \neq j$.
\end{lemma}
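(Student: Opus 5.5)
Write a common element of $\Lambda_i\cap\Lambda_j$ as $B_i(k,l)^T=B_j(k',l')^T$ with $k,k'\in\Z^m$, $l,l'\in\Z^n$. The bottom block gives $Tl=Tl'$, so $l=l'$ because $T$ is invertible. The top block then reads
\[
A_ik - A_jk' = (S_j-S_i)\,l .
\]
If $l=0$, this becomes $A_ik=A_jk'\in A_i\Z^m\cap A_j\Z^m=\{0\}$, so $k=k'=0$ and the element is $0$. Everything therefore reduces to choosing the $S_i$ so that, for $i\ne j$ and every $l\in\Z^n\setminus\{0\}$,
\[
(S_j-S_i)\,l\notin A_i\Z^m+A_j\Z^m .
\]

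The set $G_{ij}:=A_i\Z^m+A_j\Z^m$ is countable; it may be dense, but that does not matter. For fixed $i\ne j$ and fixed $l\ne 0$, consider the bad set
\[
\{(S_1,\dots,S_N)\in \mathrm{M}_{m\times n}(\R)^N : (S_j-S_i)l\in G_{ij}\}.
\]
It is a countable union, over $g\in G_{ij}$, of the sets where $(S_j-S_i)l=g$. The map $(S_1,\dots,S_N)\mapsto (S_j-S_i)l$ is a surjective linear map onto $\R^m$: it is already surjective in the variable $S_j$ alone, since $l\ne0$. So each such fiber is a proper affine subspace of $\mathrm{M}_{m\times n}(\R)^N\cong\R^{Nmn}$ of codimension $m\ge1$, and hence has Lebesgue measure zero.

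Taking the countable union over pairs $(i,j)$, $l\in\Z^n\setminus\{0\}$ and $g\in G_{ij}$, the total bad set is Lebesgue null. Any $(S_1,\dots,S_N)$ outside it works; in fact almost every choice does, and one could equally argue by Baire category. For definiteness one may take $S_1=0$ and choose the others generically.

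The main point to check is the reduction $l=l'$. It uses that all the lattices share the same lower-right block $T$ and the same zero lower-left block, which makes the problem a countable avoidance condition on the differences $S_j-S_i$. Once that reduction is made, the measure-zero argument is routine.
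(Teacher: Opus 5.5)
Your proposal is correct and is essentially the paper's argument. The bottom block forces $l=l'$, the case $l=0$ is settled by $A_i\Z^m\cap A_j\Z^m=\{0\}$, and for $l\neq 0$ the bad choices form a countable union of affine subspaces of codimension $m$, hence a null set. The only difference is cosmetic: you choose $(S_1,\dots,S_N)$ all at once in $\mathrm{M}_{m\times n}(\R)^N$, whereas the paper picks each $S_i$ inductively so as to avoid conflicts with $S_1,\dots,S_{i-1}$.
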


\begin{proof}
Notice that
\[\Lambda_i=\begin{pmatrix}
A_i & S_i\\
 0 & T
\end{pmatrix}\Z^{m+n}=\left\{\left(a_i+S_i k, Tk\right)^T\in \R^{m+n}:a_i\in A_i\Z^m,k\in \Z^{n}\right\}.\]
We select
the matrices $S_i$ inductively: once $S_1,\ldots,S_{i-1}$ have
been fixed, $S_i$ is chosen so that
\[
\Lambda_i\cap \Lambda_j=\{0\},
\quad\text{for every }j<i.
\]
Proceeding in this way yields $N$ lattices satisfying
\[
\Lambda_i\cap \Lambda_j=\{0\},
\quad\text{for all }i\neq j.
\]

Suppose that $S_1,\ldots,S_{i-1}$ have already been chosen. Fix $j<i$. If $x\in \Lambda_i\cap \Lambda_j$, there exists $a_i\in A_i\Z^m,a_j\in A_j\Z^m,k,k'\in\Z^n$ such that 
\[(a_i+S_i k,Tk)=(a_j+S_j k',Tk'),\]
and consequently $k=k'$. If $k=0$, then $x=(a_i,0)=(a_j,0)$. It follows from $A_i\Z^m\cap A_j\Z^m=\{0\}$ that $x=0$. Otherwise, we have $x\ne 0$ and 
\begin{equation}%\label{bad case}
   S_ik=S_jk+a_j-a_i.\nonumber
\end{equation}

Define
\[
\mathcal{S}_{j,k,a_i,a_j}=\left\{S\in \mathrm{M}_{m\times n}(\R): Sk=S_jk+a_j-a_i\right\},\]
which is an undesirable set, because it would cause $\Lambda_i$ and $\Lambda_j$ to intersect nontrivially. If $ Sk=S_jk+a_j-a_i$, which equivalent to 
\[u_tk=\left(S_jk+a_j-a_i\right)_t,\quad t=1,\ldots,m,\]
where $S=(u_1,\ldots,u_m)^{T}, u_t^{T}\in \R^n,(S_jk+a_j-a_i)_t$ is the $t$-th coordinate of $S_jk+a_j-a_i$. Combining this with $k\ne 0$, we obtain that
$u_t$ has an $(n-1)$-dimensional solution space for all $1\le t\le m$. Then the dimension of $\mathcal{S}_{j,k,a_i,a_j}$ is $mn-m$. Combining this with $\mathrm{M}_{m\times n}(\R)\cong \R^{mn}$ yields that $\mathcal{S}_{j,k,a_i,a_j}$ is a null set with respect to the Lebesgue measure on $\R^{mn}$. Hence
\[\mathcal{W}_i=\bigcup_{j< i}\bigcup_{k\in \Z^n\setminus  \{0\}}\bigcup_{a_i,a_j\in \Z^m }\mathcal{S}_{j,k,a_i,a_j}\]
is also a null set with respect to the Lebesgue measure on $\R^{mn}$. Therefore, we may choose $S_i\in \mathrm{M}_{m\times n}(\R)\setminus\mathcal{W}_i$, thereby completing the proof.    
\end{proof}

A direct argument yields the following simple observation.
\begin{lemma}\label{upper-matrix-disjoint}
For $i=1,\ldots,N$ let
\[B_i=\begin{pmatrix}
b_{11}^{(i)}& b_{12}^{(i)}&\cdots &b_{1d}^{(i)}\\ 
0& b_{22}^{(i)}&\cdots & b_{2d}^{(i)}\\
\vdots & \ddots & \ddots & \vdots\\
0 & \cdots & 0 & b_{dd}^{(i)}
\end{pmatrix}\in \mathrm{GL}(d,\R),\]
and define $\Lambda_i=B_i\Z^d$. Suppose that, for $r=1,\ldots,d,$ 
\[b_{rr}^{(i)}/b_{rr}^{(j)} \in \R\setminus\Q,\quad \text{whenever}\quad i\ne j.\]
Then $\Lambda_i\cap \Lambda_j=\{0\}$ for all $i\neq j$.
\end{lemma}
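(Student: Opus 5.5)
We argue by back-substitution from the last coordinate upward. Let $i\neq j$ and take $x\in\Lambda_i\cap\Lambda_j$. Then there are integer vectors $k=(k_1,\ldots,k_d)^T$ and $k'=(k_1',\ldots,k_d')^T$ in $\Z^d$ with
\[
x=B_ik=B_jk'.
\]
The goal is to show $k=k'=0$ by downward induction on the row index $r$. Since $B_i$ is invertible, this gives $x=0$.

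For $r=d$, the last coordinate of $x$ reads $b_{dd}^{(i)}k_d=b_{dd}^{(j)}k_d'$. If $k_d\neq 0$, then $k_d'\neq0$ as well, because $b_{dd}^{(i)}\neq 0$. Dividing gives
\[
b_{dd}^{(i)}/b_{dd}^{(j)}=k_d'/k_d\in\Q,
\]
which contradicts the hypothesis. Hence $k_d=0$, and then $b_{dd}^{(j)}k_d'=0$ forces $k_d'=0$, since $b_{dd}^{(j)}\ne0$. Both diagonal entries are nonzero because the matrices are invertible and upper triangular.

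For the inductive step, assume $k_s=k_s'=0$ for all $s>r$. Then the $r$-th coordinate of $x$ reduces to
\[
b_{rr}^{(i)}k_r=b_{rr}^{(j)}k_r',
\]
because all the off-diagonal contributions $b_{rs}^{(i)}k_s$ and $b_{rs}^{(j)}k'_s$ with $s>r$ vanish. The same irrationality argument, using $b_{rr}^{(i)}/b_{rr}^{(j)}\notin\Q$, gives $k_r=k_r'=0$. After reaching $r=1$ we have $k=0$, so $x=0$.

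There is no real obstacle here. The only point needing care is ordering the induction from the bottom row upward, so that the upper-triangular structure kills the off-diagonal terms before each comparison of diagonal entries. Note that only the hypothesis for the single pair $(i,j)$ under consideration is used.
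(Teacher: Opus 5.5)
Your proposal is correct and follows the paper's proof essentially step for step. Both arguments write $x=B_ip=B_jq$ and run a descending induction from the $d$-th coordinate. The upper-triangular structure reduces each row to $b_{rr}^{(i)}p_r=b_{rr}^{(j)}q_r$, and the irrationality of $b_{rr}^{(i)}/b_{rr}^{(j)}$ then forces $p_r=q_r=0$.
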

\begin{proof}
Fix two distinct indices $i\ne j$ and take any $x\in \Lambda_i\cap \Lambda_j$. By definition of the lattices, there exist integer vectors 
\[p=(p_1,\ldots,p_d)^T, \quad q=(q_1,\ldots,q_d)^T\in\Z^d\]
such that $x=B_ip=B_jq$. Since $B_i$ and $B_j$ are invertible upper triangular matrices, their
diagonal entries are nonzero, and the $d$-th coordinate of $B_ip=B_jq$
satisfies
\begin{equation}\label{d-coordinate}
 b_{dd}^{(i)}p_d=b_{dd}^{(j)}q_d.
\end{equation}
It follows that $p_d=0$ if and only if
$q_d=0$. If $p_d\neq 0$, then also
$q_d\neq 0$, and \eqref{d-coordinate} gives
\[
b_{dd}^{(i)}/b_{dd}^{(j)}=q_d/p_d\in\Q,
\]
contradicting the hypothesis that $b_{dd}^{(i)}/b_{dd}^{(j)}$ is irrational.
Therefore,
$p_d=q_d=0.$

Now we proceed by descending induction. Suppose that for some $k$ with $1\le k<d$, we have already shown that
\[p_{k+1}=\cdots=p_{d}=0,\quad q_{k+1}=\cdots=q_d=0.\]
Equating the $k$-th coordinates of $B_ip$ and $B_jq$ yields
\[
\sum_{r=k}^{d}b_{kr}^{(i)}p_r=\sum_{r=k}^{d}b_{kr}^{(j)}q_r.\]
By the induction hypothesis, all terms with $r>k$ vanish, so this reduces to
\[
b_{kk}^{(i)}p_k=b_{kk}^{(j)}q_k .
\]
By the same argument as for the $d$-th coordinate, using the nonvanishing of the diagonal entries and the irrationality of $b_{kk}^{(i)}/b_{kk}^{(j)}$, we obtain
$p_k=q_k=0.$ By induction, we conclude that $p=q=0$. Hence $x=0$, and therefore $\Lambda_i\cap \Lambda_j=\{0\}$. This completes the proof.
\end{proof}

We shall use the following notation throughout the remainder of the paper.
For positive quantities $a$ and $b$, we write $a\lesssim b$ if
$a\leq Cb$ for some implicit constant $C>0$. We write $a\asymp b$ if
both $a\lesssim b$ and $b\lesssim a$ hold.

We are now ready to prove Theorem \ref{1/d}.

\begin{proof}[Proof of Theorem \ref{1/d}.]
We divide the proof into two cases. The first case is when $d=2$. In the second case, we use the two-dimensional construction to handle the case $d>2$, which completes the proof.

\textbf{Case 1}: $d=2$.

Fix $s,t\in \R$ with $0<s<t$. We choose $a_1,\ldots,a_N\in\left[s\sqrt{N},t\sqrt{N}\right]$ inductively.
After $a_1,\ldots,a_{i-1}$ have been chosen, the forbidden set 
\[\bigcup_{j<i}a_j\Q\]
is countable, while $\left[s\sqrt{N},t\sqrt{N}\right]$ is uncountable. Hence we may choose
\[
a_i\in \left[s\sqrt N,t\sqrt N\right]\setminus \bigcup_{j<i}a_j\Q.
\]
This gives
\[
a_i/a_j\notin\Q,
\quad\text{whenever }i\neq j.
\]
Let 
\[B_i=\begin{pmatrix}
a_i& b\\ 
0& 1/a_i
\end{pmatrix},\quad\Lambda_i=B_i\Z^2,\quad v_i= (b,1/a_i)^T,
\]
where $b$ is a nonzero constant satisfying $|b| \asymp  \frac{1}{\sqrt{N}}$. 
By Lemma \ref{upper-matrix-disjoint}, we obtain 
\[\Lambda_i\cap\Lambda_j=\{0\}, \quad  \text{for  all }\ i\ne j.\]
It is straightforward to verify that $\text{vol}(\Lambda_i)=1$. 
Choose $w_i\in \R^2$ linearly independent of $v_i$ with $\left \| w_i \right \|\lesssim \frac{1}{\sqrt{N}}$, and let
\[
P_i=\{kv_i+\ell w_i:0\leq k< 1,0\leq \ell< 1\}.\]

Define
\[
\tau=|b|^2\mathds 1_{\left[0,1/|b|\right)^2}\ast
\frac{1}{|\det(v_1,w_1)|}\mathds 1_{P_1}\ast\cdots \ast
\frac{1}{|\det(v_N,w_N)|}\mathds 1_{P_N}.
\]
It follows from Lemma \ref{tile function} that $\tau\in L^1(\R^2)$ and that
$\tau$ tiles with all lattices $\Lambda_1,\ldots,\Lambda_N$.
By a direct calculation, we obtain
\[\diam \supp \mathds 1_{\left[0,1/|b|\right)^2}\lesssim \sqrt{N}\]
and 
\[\diam \supp \mathds1_{P_i}\le \left \| v_i \right \|+\left \| w_i \right \|\lesssim\frac{1}{\sqrt N}.\]
Therefore, 
\[\diam \supp \tau\le \diam \supp\mathds 1_{\left[0,1/|b|\right)^2} +\sum_{i=1}^{N}\diam\supp \mathds1_{P_i}\lesssim \sqrt N+N\cdot\frac{1}{\sqrt N}\lesssim \sqrt N.\]
%Combining this with \eqref{lower bound} yields $\diam \supp \tau=O(\sqrt{N})$. 
Thus, in the case $d=2$, we take $f:=\tau$, which completes the construction.

\textbf{Case 2}: $d>2$.

Let $\alpha =N^{\frac{2-d}{2d}}$, $\beta=N^{\frac{1}{d}}$, and 
let $B_1,\ldots,B_N\in \mathrm{GL}(2,\R)$ be the
matrices constructed in Case~1. By construction, the corresponding
plane lattices satisfy
\[
B_i\Z^2\cap B_j\Z^2=\{0\},\quad
\text{whenever}\  i\neq j,\] and $\det B_i=1$ for all i. Let $n=d-2$. For $1\le i\le N$ and $S_i\in \R^{2\times n}$, consider
\[
\Lambda_i=
\begin{pmatrix}
\alpha B_i & S_i\\
0 & \beta I_n
\end{pmatrix}\Z^d,
\]
where $I_n$ denotes the $n\times n$ identity matrix. Independently of the choice of $S_i$,
\[
\operatorname{vol}(\Lambda_i)
=\alpha^2\beta^{d-2}
=1.
\]
By Lemma \ref{lattice-trival-disjoint}, we may choose $S_1,\ldots,S_N$ so that
\[
\Lambda_i\cap\Lambda_j=\{0\},
\quad\text{for all }i\neq j.
\]
We fix such a choice of $S_1,\ldots,S_N$ for the remainder of the argument.

We now construct a nonnegative function $f\in L^1(\R^d)$ that tiles with all $\Lambda_i$.
Let 
\[g(x)=\alpha^{-2}\tau(x/\alpha), \quad h(y)=\beta^{-n}\mathds{1}_{[0,\beta)^{n}}(y),\quad f(x,y)=g(x)h(y).\]
Since $g\in L^1(\R^2)$ and $h\in L^1(\R^n)$, it follows immediately that $f\in L^1(\R^{d})$.
For $g(x)=\alpha^{-2}\tau(x/\alpha)$, we have
$\widehat g(x)=\widehat\tau(\alpha x).$ For each $i=1,\ldots,N$, since $\tau$ tiles with
$B_i\Z^2$, by \eqref{tile-all-lattices},  $\widehat\tau$ vanishes on 
\[\bigcup_{i=1}^{N}B_i^{-T}\Z^2\setminus\{0\}.\]
Thus $\widehat g$ vanishes on
\[
\bigcup_{i=1}^{N}\alpha^{-1}B_i^{-T}\Z^2\setminus\{0\}=\bigcup_{i=1}^{N}
(\alpha B_i)^{-T}\Z^2\setminus\{0\}.
\]
Applying \eqref{tile-all-lattices} again, we conclude that $g$ tiles with all
lattices $\alpha B_1\Z^2,\ldots,\alpha B_N\Z^2$. Moreover, the zero set of $\widehat h$ is given by
\[
\left\{\xi\in\R^n:
\xi_i\in \beta^{-1}\Z\setminus\{0\}
\text{ for some } i=1,\ldots,n
\right\}.
\]
Since $(\beta I_n)^{-T}\Z^n=\beta^{-1}\Z^n,$
$\widehat h$ vanishes on
$(\beta I_n)^{-T}\Z^n\setminus\{0\}.$
Hence, by \eqref{tile-all-lattices}, $h$ tiles with $\beta I_n\Z^n$. Thus, by Corollary \ref{cor:collection}, $f$ tiles with all lattices
$\Lambda_1,\ldots,\Lambda_N$. We now estimate the diameter of $\diam \supp f$. We have
\[\diam \supp g=\alpha\ \diam \supp \tau\lesssim\alpha\sqrt{N}=N^{1/d}\]
and
\[
\diam \supp h=\sqrt{n}\beta
\lesssim N^{1/d}
.\]
Consequently,
\[\diam \supp f \le \diam \supp g+\diam \supp h\lesssim N^{1/d}.\]
%Combining this with \eqref{lower bound}, this gives $\diam \supp f=O(N^{1/d})$. 
This shows that $\diam\supp f=O(N^{1/d})$, thereby completing the proof.
\end{proof}

\section{The Proof of Theorem \ref{-close}}
\begin{defn}
Let $\Lambda$ be a lattice. A nonzero vector  $v\in \Lambda$ is called primitive if for any integer $n\ge 2$ and $u\in \Lambda$,
$v\ne nu.$
\end{defn}

The following lemma fixes a primitive vector of the lattice and partitions the dual lattice into two disjoint subsets with respect to this vector.

\begin{lemma}\label{disjoint}
Let $c>0$ and let $\Lambda$ be a lattice of volume c in $\R^2$. If $v=(v_1,v_2)^T$ is a primitive vector of $\Lambda$, then 
\[\Lambda^*=\left\{\xi\in\Lambda^*:\left \langle \xi, v \right \rangle\in\Z\setminus\{0\} \right\}\cup \frac{\Z}{c}(-v_2,v_1)^T.\]
\end{lemma}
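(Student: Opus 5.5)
Since $v$ is primitive in the rank-two lattice $\Lambda$, it extends to a basis: choose $u\in\Lambda$ with $\Lambda=B\Z^2$, where $B=(v,u)$. Replacing $u$ by $-u$ if necessary, we may assume $\det B=c>0$. The dual lattice is then $\Lambda^*=B^{-T}\Z^2$. It is spanned by the dual basis $\xi_1,\xi_2$, characterized by
\[
\langle \xi_1,v\rangle=1,\quad \langle\xi_1,u\rangle=0,\qquad \langle\xi_2,v\rangle=0,\quad\langle\xi_2,u\rangle=1 .
\]
So every $\xi\in\Lambda^*$ has the form $\xi=p\xi_1+q\xi_2$ with $(p,q)^T\in\Z^2$, and $\langle\xi,v\rangle=p$.

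The decomposition then splits according to whether $p$ vanishes. If $p\neq0$, then $\langle\xi,v\rangle\in\Z\setminus\{0\}$, which places $\xi$ in the first set. If $p=0$, then $\xi=q\xi_2$, so I will compute $\xi_2$ explicitly. It is orthogonal to $v$, hence a multiple of $(-v_2,v_1)^T$. Write $\xi_2=\lambda(-v_2,v_1)^T$. The condition $\langle\xi_2,u\rangle=1$ gives
\[
\lambda(-v_2u_1+v_1u_2)=\lambda\det(v,u)=\lambda c=1,
\]
so $\lambda=1/c$. Hence the vectors with $p=0$ are exactly $\frac{\Z}{c}(-v_2,v_1)^T$.

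For the reverse inclusions, the first set lies in $\Lambda^*$ by definition. The second set is $\Z\xi_2$, which is contained in $\Lambda^*$. This gives equality of $\Lambda^*$ with the union. The two pieces are also disjoint, as the lemma's preamble suggests: an element of the second set has $\langle\xi,v\rangle=0$, so it cannot belong to the first.

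The only point needing care is the existence of the completing vector $u$, which is the standard fact that a primitive vector of a rank-two lattice extends to a basis. To prove it, write $v=B_0(m,n)^T$ in some basis $B_0$ of $\Lambda$. Primitivity forces $\gcd(m,n)=1$. Bézout then gives integers $r,s$ with $ms-nr=1$, and setting $u=B_0(r,s)^T$ completes $v$ to a basis. The sign of $c$ is handled by the orientation choice made at the start.
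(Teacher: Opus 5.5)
Your proposal is correct and follows essentially the same route as the paper: complete $v$ to a basis $(v,u)$ of $\Lambda$, expand $\xi\in\Lambda^*$ in the dual basis so that $\langle\xi,v\rangle$ is the integer coefficient $p$, and identify the dual vector orthogonal to $v$ as $\frac{1}{c}(-v_2,v_1)^T$. The differences are cosmetic. You fix the orientation so that $\det(v,u)=c$, whereas the paper handles the sign with absolute values such as $|\langle \ell_2,u\rangle|=1$. You also prove the basis-completion step directly via B\'ezout, whereas the paper cites it.
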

\begin{proof}
It follows from $v\in \Lambda$ that we can decompose $\Lambda^*$ into two disjoint sets with respect to $v$,
\[\Lambda^*=\{\xi\in \Lambda^*:\left \langle \xi, v \right \rangle\in\Z\setminus\{0\} \}\cup \{\xi\in \Lambda^*:\left \langle \xi, v \right \rangle=0 \}.\]
So, we only need to show that
\[\frac{\Z}{c}(-v_2,v_1)^T=\{\xi\in \Lambda^*:\left \langle \xi, v \right \rangle=0 \}.\]
Since $v$ is a primitive vector of $\Lambda$, by 
\cite[Theorem 3.1.14]{hans2024},
there exists $u\in \Lambda$ such that 
\[\Lambda=\Z v+\Z u.\]
Then we obtain
\[\Lambda^*=\frac{1}{c}\begin{pmatrix}
    u_2 & -v_2\\ 
 -u_1 & v_1
\end{pmatrix}\Z^2.\]
Let $\ell_1=\frac{1}{c}(u_2,-u_1)^T$ and $\ell_2=\frac{1}{c}(-v_2,v_1)^T$.
A direct calculation yields
\[\left | \left \langle \ell_1, v \right \rangle\right | =1,\quad \left \langle\ell_1, u\right \rangle=0,\quad \left \langle\ell_2, v\right \rangle=0,\quad \left |  \left \langle\ell_2, u\right \rangle\right | =1.
\]
Together with the definition of the dual lattice, this yields
\[\frac{\Z}{c}(-v_2,v_1)^T\subset\{\xi\in \Lambda^*:\left \langle \xi, v \right \rangle=0 \}.\]

Conversely, take any $\xi\in\Lambda^*$ with $\langle \xi, v \rangle=0$.
Since $\xi$ can be expressed uniquely as $\xi=n\ell_1+m\ell_2$ for some $n,m\in\Z$, we have
\[\left \langle \xi, v \right \rangle=\left \langle (n\ell_1+m\ell_2), v\right \rangle=\left \langle n\ell_1, v\right \rangle+\left \langle m\ell_2, v\right \rangle=0.\]
From $\left |\left \langle \ell_1, v\right \rangle\right |=1$ and $\left \langle\ell_2, v\right \rangle=0$ we immediately get $n=0$. Consequently,
\[\xi=m\ell_2=\frac{m}{c}(-v_2,v_1)^T\in\frac{\Z}{c}(-v_2,v_1)^T.\]
This shows the reverse inclusion
\[
\frac{\Z}{c}(-v_2,v_1)^T\supset\left\{\xi\in \Lambda^*:\langle \xi, v \rangle=0\right\}.
\]
Therefore, we conclude that
\[\Lambda^*=\left\{\xi\in\Lambda^*:\left \langle \xi, v \right \rangle\in\Z\setminus\{0\} \right\}\cup \frac{\Z}{c}(-v_2,v_1)^T.\]
\end{proof}

Before proceeding to the proof, we introduce the following notation. For $A=(a_{ij})\in \R^{d\times d}$, define
\[
\left \|  A\right \|_{\infty}:=\max_{1\leq i,j\leq d}|a_{ij}|.
\]

We are now equipped to prove Theorem \ref{-close}.

\begin{proof}[Proof of Theorem \ref{-close}.] 
Set $\text{vol}(L_i)=\omega_{j_i}\in \mathcal S.$ The proof proceeds in three steps.

\textbf{Step 1}: Since
\[
L_i=A_i\Z^2=A_iC_i\Z^2,
\quad
\text{for every }C_i\in \mathrm{SL}(2,\Z),
\]
we construct $C_i(k'_i,\ell_i)\in \mathrm{SL}(2,\Z)$, where $k'_i\in\Z\setminus\{0\}$ and $\ell_i\in\Z$ are parameters. 
The parameters $k'_i$ and $\ell_i$ are then determined in Step 2: we choose $k'_i$ so that
$
\left\|B_i-A_iC_i(k'_i,\ell_i)\right\|_{\infty}\leq\varepsilon,$
where $B_i$, constructed below, is a basis matrix whose columns generate the desired lattice $\Lambda_i$.
The parameter $\ell_i$ is chosen so as to
control the norms of a specific primitive vector in $\Lambda_i$.
The same construction is used for every $i$. For notational simplicity, we write
$
C_i:=C_i(k'_i,\ell_i).
$

Since
\[
A^{-1}_i\{(x,y)^T:x>0,\ |y|<x\}
\]
is a nonempty open cone, we can choose $x_i=(x_{i_1},x_{i_2})^T\in\Z^2$ with $\gcd(x_{i_1},x_{i_2})=1$ such that $a_i=A_ix_i=(a_{i_1},a_{i_2})^T$ satisfies
\begin{equation}\label{ai1ai2}
    a_{i_1}>0,\quad |a_{i_2}|\le a_{i_1}.
\end{equation}
It follows from the Euclidean algorithm that there exists $y_i\in \Z^2$ such that 
\[\det(y_i, x_i)=1.\]
Let $k'_i\in\Z\setminus\{0\}$ and $\ell_i\in\Z$ be parameters. Define
\[k_i=(k'_i,1)^T,\quad C_i=(C_{i_1},C_{i_2}),\]
where 
\begin{equation}%\label{Ci1Ci2}
   C_{i_1}=y_i+\ell_i x_i, \quad
   C_{i_2}=x_i-k'_iC_{i_1}. \nonumber
\end{equation}
Then we obtain
\begin{equation}%\label{xi}
C_ik_i=k'_iC_{i_1}+C_{i_2}=x_i,\nonumber
\end{equation}
and hence 
\begin{equation}\label{ai}
    A_iC_ik_i=A_ix_i=a_i .
\end{equation}
Moreover, from the construction we see that $C_i \in \Z^{2\times 2}$, 
and the calculation
\[\det C_i=\det (C_{i_1},C_{i_2})=\det (C_{i_1},x_i)=\det (y_i,x_i)=1,\]
then yields $C_i \in \mathrm{SL}(2,\Z)$.

\textbf{Step 2}: Fix $r\in \R$ with $0<|r|\asymp \frac{1}{\sqrt{N}}$. For each i, let 
\[v(d_i)=(r,d_i)^T,\quad \Lambda_i=B(d_i,t_i)\Z^2,\] where $d_i$ and $t_i$ are parameters to be chosen. We first construct $B(d_i,t_i)\in \mathrm{GL}(2,\R)$ with $d_i$ and $t_i$ as parameters such that
\[B(d_i,t_i)k_i=v(d_i),\quad \det B(d_i,t_i)=\det A_i. \]
Then we determine the parameters $\{d_i\}_{i=1}^{N}$ and $\{t_i\}_{i=1}^{N}$  so that
\[\quad \left \|  B(d_i,t_i)-A_iC_i\right \|_{\infty}\le \varepsilon,\quad \text{for all}\ i,\]
and
\[\Lambda_i\cap \Lambda_j=\{0\}, \quad \text{for all}\ i\ne j.\] 

Define
\begin{equation}\label{B}
 B(d_i,t_i)=(B_1(d_i,t_i),B_2(d_i,t_i)),   
\end{equation}
\begin{equation}\label{tau}
\tau(d_i)=A_iC_{i_1}+\frac{v(d_i)-a_i}{k'_i} 
\end{equation}
and
\[\psi(d_i)=\frac{\det(\tau(d_i),v(d_i))-\det A_i}{r},\]
where 
\begin{equation}\label{Bi}
 B_1(d_i,t_i)=\tau(d_i)+(0,\psi(d_i))^T+t_i v(d_i),\quad B_2(d_i,t_i)=v(d_i)-k'_iB_1(d_i,t_i). 
\end{equation}
By a direct calculation, we have
\[B(d_i,t_i)k_i=k'_iB_1(d_i,t_i)+B_2(d_i,t_i)=v(d_i).\] 
Combining this with the identity $\gcd(k'_i,1)=1$ and Theorem 3.1.15 in
 \cite{hans2024}, we obtain that $v(d_i)$ is a primitive vector of $\Lambda_i$ for any $d_i$. 
A direct calculation yields
\begin{align}
\det B(d_i,t_i)&=\det(B_1(d_i,t_i), v(d_i))\nonumber\\
&=\det (\tau(d_i)+(0,\psi(d_i))^T,v(d_i))\nonumber\\
&=\det(\tau(d_i),v(d_i))+\det((0,\psi(d_i))^T,v(d_i))\nonumber\\
&=\det(\tau(d_i),v(d_i))-(\det(\tau(d_i),v(d_i))-\det A_i)\nonumber\\
&=\det A_i\nonumber.    
\end{align}

In the following, we shall determine the parameters $\{d_i\}_{i=1}^{N}$ and $\{t_i\}_{i=1}^{N}$  so that
\[\quad \left \|  B(d_i,t_i)-A_iC_i\right \|_{\infty}\le \varepsilon,\quad \text{for all}\ i,\]
and
\[\Lambda_i\cap \Lambda_j=\{0\}, \quad \text{for all}\ i\ne j.\] 

Let 
\begin{equation}\label{d_{i,0}-definition}
d_{i,0}=\frac{\det A_i+r\left((A_iC_{i_1})_2-\frac{a_{i_2}}{k'_i}\right)}{(A_iC_{i_1})_1-\frac{a_{i_1}}{k'_i}},    
\end{equation}
which depends on the parameters $k_i'$ and $\ell_i$.
Notice that
\[\det(A_iC_{i_1},A_ix_i)=\det A_i\det(C_{i_1},x_i)=\det A_i\det(y_i+\ell_ix_i,x_i)=\det A_i.\] 
Combining this with 
$\eqref{ai}$ yields
\[(A_iC_{i_1})_2=\frac{(A_iC_{i_1})_1  a_{i_2}-\det A_i}{a_{i_1}},\]
and hence
\[
d_{i,0}=r\frac{a_{i_2}}{a_{i_1}}+\frac{\det A_i-\frac{r\det A_i}{a_{i_1}}}{(A_iC_{i_1})_1-\frac{a_{i_1}}{k'_i}}=r\frac{a_{i_2}}{a_{i_1}}+\frac{\det A_i\left(1-\frac{r}{a_{i_1}}\right)}{(A_iy_i)_1+\ell_ia_{i_1}-\frac{a_{i_1}}{k'_i}}.    
\]
Choose $\ell_i$ so large that
\[
\ell_i\ge \left\lfloor
\frac{\frac{|\det A_i|}{|r|}\left|1-\frac{r}{a_{i_1}}\right|
+\left|(A_iy_i)_1\right|+a_{i_1}}{a_{i_1}}
\right\rfloor+1.
\]
Then
\begin{align*}
\left|(A_iC_{i_1})_1-\frac{a_{i_1}}{k'_i}\right|&=\left|
(A_iy_i)_1+\ell_i a_{i_1}-\frac{a_{i_1}}{k'_i}
\right|\\
&\ge
\ell_i a_{i_1}-\left|(A_iy_i)_1\right|-a_{i_1}\\
&>
\frac{|\det A_i|}{|r|}
\left|1-\frac{r}{a_{i_1}}\right|.    
\end{align*}
Thus the denominator of $d_{i,0}$ in \eqref{d_{i,0}-definition} is nonzero. Moreover,
\[
\left|
\frac{\det A_i\left(1-\frac{r}{a_{i_1}}\right)}
{(A_iy_i)_1+\ell_i a_{i_1}-\frac{a_{i_1}}{k'_i}}
\right|
\le |r|.
\]
Therefore, combining this with 
$\eqref{ai1ai2}$ yields
\begin{equation}\label{d_{i,0}}
|d_{i,0}|
\le
\left|r\frac{a_{i_2}}{a_{i_1}}\right|+|r|\le 2|r|\lesssim \frac{1}{\sqrt{N}}.
\end{equation}

Independently of the choice of $k_i'$ and $\ell_i$, we obtain
\begin{align}\label{1}
\det(\tau(d_{i,0}),v(d_{i,0}))&=\det\begin{pmatrix}
 (A_iC_{i_1})_1+\frac{r-a_{i_1}}{k'_i} & r    \\
 (A_iC_{i_1})_2+\frac{d_{i,0}-a_{i_2}}{k'_i} & d_{i,0}
    \end{pmatrix}\nonumber\\
&=d_{i,0}\left((A_iC_{i_1})_1-\frac{a_{i_1}}{k'_i}\right)-r\left((A_iC_{i_1})_2-\frac{a_{i_2}}{k'_i}\right)\nonumber\\
&=\det A_i+r\left((A_iC_{i_1})_2-\frac{a_{i_2}}{k'_i}\right)-r\left((A_iC_{i_1})_2-\frac{a_{i_2}}{k'_i}\right)(\text{By} \ \eqref{d_{i,0}-definition}.)\nonumber\\
&=\det A_i\nonumber
\end{align}
%It follows from $\eqref{Ci1Ci2}$ and $\eqref{ai}$ that 
%\begin{equation}\label{AiCi1-1}
% (A_iC_{i_1})_1=(A_iy_i)_1+\ell_ia_{i_1},\quad (A_iC_{i_1})_2=(A_iy_i)_2+\ell_ia_{i_2},  
%\end{equation}
%and hence, for any $k_i'\in \Z\setminus\{0\}$ and $\ell_i\in\Z$, we have
and hence
\begin{equation}\label{psidi0=0}
\psi(d_{i,0})=\frac{\det(\tau(d_{i,0}),v(d_{i,0}))-\det A_i}{r}=\frac{\det A_i-\det A_i}{r}=0. 
\end{equation}
Combining this with $\eqref{ai}$, we obtain 
\begin{align*}
B(d_{i,0},0)-A_iC_i&=\left(\tau(d_{i,0}), v(d_{i,0})-k'_i\tau(d_{i,0})\right)-(A_iC_{i_{1}},A_iC_{i_{2}})\nonumber\\
&=\left(A_iC_{i_{1}}+\frac{v(d_{i,0})-a_i}{k'_i},a_i-k'_iA_iC_{i_{1}}\right)-(A_iC_{i_{1}},A_iC_{i_{2}})\nonumber\\
&=\left(\frac{v(d_{i,0})-a_i}{k'_i},0\right)\nonumber\\
&=\begin{pmatrix}
    \frac{r-a_{i_1}}{k'_i}&0\\
    \frac{d_{i,0}-a_{i_2}}{k'_i}&0
\end{pmatrix}.
\end{align*}
Consequently,
\begin{equation}\label{e/2-3-part}
    \left \|  B(d_{i,0},0)-A_iC_i\right \|_{\infty}< \varepsilon/2 
\end{equation}
whenever $k'_i\ge \lfloor \frac{2\max\{|r|+|a_{i_1}|,\ 2|r|+|a_{i_2}|\}}{\varepsilon}\rfloor+1$. We now choose $k'_i\in\Z\setminus\{0\}$ satisfying the lower bound above. 
The chosen pair $(k'_i,\ell_i)$ then fixes $C_i$.

Applying 
$\eqref{ai}$, \eqref{B}, \eqref{tau} and \eqref{Bi}, we obtain 
\begin{align}
B(d_i,t_i)-A_iC_i&=\left(\frac{v(d_i)-a_i}{k'_i}+(0,\psi(d_i))^T+t_iv(d_i),-k'_i(0,\psi(d_i))^T-t_ik'_iv(d_i) \right) \nonumber\\
&=\begin{pmatrix}
 \frac{r-a_{i_1}}{k'_i}+t_ir
 &-t_ik'_ir\\
  \frac{d_i-a_{i_2}}{k'_i}+\psi(d_i)+t_id_i&-k'_i\psi(d_i)-t_ik'_id_i
\end{pmatrix}\nonumber
\end{align}
and
\[
\psi(d_i)=\frac{d_i}{r}\left((A_iC_{i_1})_1-\frac{a_{i_1}}{k'_i}\right)-\left((A_iC_{i_1})_2-\frac{a_{i_2}}{k'_i}\right)-\frac{\det A_i}{r}. \]
Then $B(d_i,t_i)-A_iC_i$ depends continuously on $d_i$ and $t_i$. Together with \eqref{e/2-3-part}, this continuity gives
$\eta_{i,0}>0$ and $\theta_i>0$ such that
\[
 \left \|  B(d,t)-A_iC_i\right \|_{\infty}< \varepsilon,\quad
\text{whenever}\
(d,t)\in
(d_{i,0}-\eta_{i,0},d_{i,0}+\eta_{i,0})
\times(-\theta_i,\theta_i).
\]
Set $\eta_i=\min\{\eta_{i,0},|r|\}.$
Then 
\[0<\eta_i\le |r|\lesssim \frac{1}{\sqrt{N}}.\] 
Define
\[
I_i=(d_{i,0}-\eta_i,d_{i,0}+\eta_i),
\quad
J_i=(-\theta_i,\theta_i).
\]
Then
\begin{equation}\label{IJ}
 \left \|  B(d,t)-A_iC_i\right \|_{\infty}< \varepsilon,
 \quad \text{for all } (d,t)\in I_i\times J_i.
\end{equation}
By \eqref{d_{i,0}}, every $d\in I_i$ satisfies $|d|\lesssim \frac{1}{\sqrt{N}}$, and hence \[\left \| v(d) \right \|\le |r|+|d|\lesssim \frac{1}{\sqrt{N}}+\frac{1}{\sqrt{N}}\lesssim \frac{1}{\sqrt{N}},\quad\text{for all}\ d\in I_i.\]

Next, in view of $\eqref{IJ}$,
for each $1\le i\le N$, we recursively choose a suitable pair
$(d_i,t_i)\in I_i\times J_i
$
such that the resulting lattices 
$\Lambda_i=B(d_i,t_i)\Z^2,
$
satisfying
\[
\Lambda_i\cap\Lambda_j=\{0\},
\quad\text{for all }i\ne j.
\]
Choose an arbitrary pair
$(d_1,t_1)\in I_1\times J_1$
and set
$\Lambda_1=B(d_1,t_1)\Z^2.$
Suppose inductively that $\Lambda_1,\ldots,\Lambda_{i-1}$ have already been constructed. We show that one can choose $(d_i,t_i)\in I_i\times J_i$ so that, upon setting
\[
\Lambda_i=B(d_i,t_i)\mathbb Z^2,
\]
we have
\[
\Lambda_i\cap\Lambda_j=\{0\},
\quad\text{for every }j<i.
\]
Suppose, there exist $n,m\in\Z^2\setminus\{0\}$ such that $B(d_i,t_i)n=B(d_j,t_j)m$.
Notice that 
\begin{equation}\label{B(dj,tj)n}
B(d_i,t_i)n=B(d_i,0)n+t_i(n_1-k'_in_2)v(d_i).    
\end{equation}
If $n_1-k'_in_2=0,$ we have $n=n_2(k'_i,1)^T$, and hence 
\[B(d_i,t_i)n=B(d_i,t_i)n_2(k'_i,1)^T=n_2v(d_i).\]
Let 
\[\mathcal{O}_{i,j,n_2}:=\left\{ d_i\in I_i:n_2v(d_i)\in \Lambda_j \right\}\]
and
\[\mathcal{W}_i:=\bigcup_{j<i}\bigcup_{n_2\in \Z\setminus\{0\}}
\mathcal{O}_{i,j,n_2}.\]
Since $\Lambda_j$ is countable, each $\mathcal O_{i,j,n_2}$ is countable.
Thus $\mathcal W_i$ is a null set with respect to the Lebesgue measure.
Hence, we may choose $d_i\in I_i\setminus\mathcal W_i$. Fix $d_i$. For this fixed $d_i$, we have $n_1-k'_in_2\ne0$. Then equation
\eqref{B(dj,tj)n} has at most one solution for $t_i$. Hence the set 
\[\mathcal{T}_{i}:=\bigcup_{j<i}\bigcup_{n,m\in\Z^2\setminus\{0\}}\left\{t_i\in J_i:B(d_i,t_i)n=B(d_j,t_j)m\right\}\]
is countable. Therefore, we may choose $t_i\in J_i\setminus\mathcal{T}_{i}$, thereby completing the construction of $B(d_i,t_i)$. Repeating this construction for $i=2,\ldots,N$, we obtain $N$ lattices $\Lambda_1,\ldots,\Lambda_N$ such that
\[
\Lambda_i\cap\Lambda_j=\{0\},
\quad \text{for all } i\neq j.
\]

\textbf{Step 3}: We construct a function $f\in L^{1}(\R^2)$ that tiles all $\Lambda_i$ and satisfies $\diam \supp f =O(\sqrt{N})$.

Since 
\[\text{vol}(\Lambda_i)=|\det B_i|=|\det A_i|=\omega_{j_i}\in \mathcal S,\]
and $v(d_1),\ldots, v(d_N)$ are primitive vectors of $\Lambda_1,\ldots,\Lambda_N$, respectively, Lemma \ref{disjoint} yields
\begin{equation}\label{big-dual-lattice}
\bigcup_{i=1}^{N}\Lambda^*_i=\bigcup_{i=1}^{N}\left(\left\{\xi\in\Lambda^*_i:\left \langle \xi, v(d_i) \right \rangle\in\Z\setminus\{0\} \right\}\cup \frac{\Z}{\omega_{j_i}}(-d_i,r)^T\right) 
\end{equation}
Choose $u_i\in\R^2$ linearly independent of $v(d_i)$ with $\left \| u_i \right \|\lesssim \frac{1}{\sqrt{N}}$ for all $i$. Define
\[P_i=\left\{zv(d_i)+z'u_i:0\le z< 1, 0\le z'< 1\right\}\]
and 
\[f=\frac{r^2}{\omega_1^2}\mathds1_{[0,\omega_1/|r|)^2}\ast\cdots\ast\frac{r^2}{\omega_s^2}\mathds1_{[0,\omega_s/|r|)^2}\ast \frac{1}{|\det(v(d_1),u_1)|}\mathds1_{P_1}\ast\cdots\ast \frac{1}{|\det(v(d_N),u_N)|}\mathds1_{P_N}.\]
As in the preceding argument, $f$ is the convolution of finitely many functions in $L^1(\R^2)$, and hence $f\in L^1(\R^2)$.
A direct calculation gives that the zero set of the Fourier transform of $\frac{r^2}{\omega_i^2}\mathds1_{[0,\omega_i/|r|)^2}$ is given by
\begin{equation}\label{zero-set-r}
\left\{\left(\frac{r}{\omega_i} p,q\right)^T:p\in \Z\setminus\{0\},q\in \R\right\}\bigcup \left\{\left(p,\frac{r}{\omega_i}q\right)^T:q\in \Z\setminus\{0\},p\in \R\right\}.    
\end{equation}
Similarly, for $1\le i\le N$, the zero set of the Fourier transform of $\frac{1}{|\det(v(d_i),u_i)|}\mathds{1}_{P_i}$ is given by
\begin{equation}\label{zero-set-pi}
\left\{\xi\in\R^2:\left \langle \xi, v(d_i) \right \rangle\in\Z\setminus\{0\}\right\}
\cup
\left\{\xi\in\R^2:\left \langle \xi, u_i \right \rangle\in\Z\setminus\{0\}\right\}.    
\end{equation}
It follows from \eqref{big-dual-lattice}, \eqref{zero-set-r} and \eqref{zero-set-pi} that $\widehat f$ vanishes on
\[\bigcup_{i=1}^N\Lambda_i^*\setminus\{0\}.\] Hence, by \eqref{tile-all-lattices},
$f$ tiles with all lattices $\Lambda_1,\ldots,\Lambda_N$. We now estimate the diameter of $\supp f$. Indeed,
\begin{align*}
\diam\supp f&\le \sum_{i=1}^{s}\frac{\sqrt{2}\omega_i}{|r|}+\sum_{i=1}^{N}\diam P_i\\
&\lesssim \sqrt{N}+\sum_{i=1}^{N}(\left\| v(d_i) \right\|+ \left\| u_i \right\|)\\
&\lesssim \sqrt{N}+\frac{N}{\sqrt{N}}\lesssim \sqrt{N}.    
\end{align*}
This shows that $\diam\supp f=O(\sqrt N)$, thereby completing the proof.
\end{proof}

\bigskip
\noindent\textbf{Acknowledgements}

\noindent
The author is grateful to Professor Mihalis Kolountzakis for his guidance during the author’s visit, for bringing this problem to the author’s attention, and for carefully reading an earlier version of the manuscript.

\printbibliography
\end{document}